\begin{document}
\providecommand{\keywords}[1]{\textbf{\textit{Keywords: }} #1}
\newtheorem{satz}{Theorem}%[section]
\newtheorem{lemma}[satz]{Lemma}
\newtheorem{prop}[satz]{Proposition}
\newtheorem{kor}[satz]{Corollary}
\theoremstyle{definition}
\newtheorem{defi}{Definition}

\newcommand{\cc}{{\mathbb{C}}}   
\newcommand{\ff}{{\mathbb{F}}}  
\newcommand{\nn}{{\mathbb{N}}}   
\newcommand{\qq}{{\mathbb{Q}}}  
\newcommand{\rr}{{\mathbb{R}}}   
\newcommand{\zz}{{\mathbb{Z}}}  
\author{Joachim K\"onig} 
\title{Computation of Hurwitz spaces and new explicit polynomials for almost simple Galois groups} 
\address{Universit\"at W\"urzburg, Emil-Fischer-Str.\ 30, 97074 W\"urzburg, Germany}
\ead{joachim.koenig@mathematik.uni-wuerzburg.de}
\date{}
{\abstract{We compute the first explicit polynomials with Galois groups $G=P\Gamma L_3(4)$, $PGL_3(4)$, $PSL_3(4)$ and $PSL_5(2)$ over $\qq(t)$.
Furthermore we compute the first examples of totally real polynomials with Galois groups $PGL_2(11)$, $PSL_3(3)$, $M_{22}$ and $Aut(M_{22})$ over $\qq$.
All these examples make use of families of covers of the projective line ramified over four or more points, and therefore use techniques of explicit computations of Hurwitz spaces.
Similar techniques were used previously e.g.\ by Malle (\cite{Ma1}), Couveignes (\cite{Cou}), Granboulan (\cite{G}) and Hallouin (\cite{H}).
Unlike previous examples, however, some of our computations show the existence of rational points on Hurwitz spaces that would not have been obvious from theoretical arguments.}}
\maketitle
\keywords{Galois theory; polynomials; moduli spaces; symbolic computation}
\section{Introduction}
In recent years, there has been a great deal of progress in explicit computation of polynomials with prescribed Galois group. One notable area of interest is the computation of $3$-point
covers of the line (Belyi maps),
for which strong tools have been developed, e.g.\ in \cite{KMSV}. Such techniques have been used to calculate explicit polynomials for many permutation groups of small degrees. Often
the existence of such polynomials defined over $\qq$ could a priori be deduced by the Rigidity Method (cf.\ \cite[Chapter I]{MM}). 
However, even for almost simple groups of relatively small degree, not all questions can be answered merely via $3$-point covers.\par
Meanwhile, covers with more than three branch points have been computed to solve some of those problems, like finding totally real polynomials with given Galois group, but also because they
sometimes give rise to multi-parameter polynomials over $\mathbb{Q}$.
A spectacular result in the computation of covers with more than three branch points was Granboulan's explicit $M_{24}$-polynomial in \cite{G}.
An important source for examples of multi-parameter polynomials is Malle's paper \cite{Ma1}, which also, along with Couveignes' \cite{Cou} and \cite{Cou2}, outlines methods for their calculation.
%LaMacchia zusätzlich??
\par
The computational results of this article can be largely divided into two areas: the calculation of explicit polynomials for some of the almost simple groups of smallest permutation degree 
for which no polynomials were previously known; and the calculation of the first totally real polynomials for other almost simple groups.\par 
Table \ref{overview} summarizes very briefly the basic features of the families of polynomials occurring and of the Hurwitz spaces that they are parametrized by.
\begin{table}
\begin{tabular}{ccccccp{35mm}}
Section & $G$ &degree& \#branch points & Hurwitz variety & Real fibers & Remarks\\ \hline
\S\ref{psl52}&$PSL_5(2)$&$31$&$4$&$\mathbb{P}^1_{\alpha}$&no&\\

\S\ref{psl34}&$P\Gamma L_3(4)$&$21$&$4$&$\mathbb{P}^1_{\alpha}$&no&\\
&$PGL_3(4)$&$21$&$4$&&no& derived from previous; \newline base curve not generically $\mathbb{P}^1$\\
&$PSL_3(4)$&$21$&$5$&&no& see previous\\
\S\ref{tr_psl34}&$P\Gamma L_3(4)$&$21$&$4$&$\mathbb{P}^1_{\alpha}$&yes&\\
\S\ref{totally_real1}.1&$PGL_2(11)$&$22$&$4$&rank 1 ell.\ curve&yes&\\
&$PSL_2(11)$&$11$&$4$&&yes&\\
\S\ref{totally_real1}.2&$PGL_2(11)$&$12$&$4$&rank 1 ell.\ curve&yes&\\
&$PSL_2(11)$&$11$&$5$&&yes&\\
\S\ref{totally_real2}&$PSL_3(3)$&$13$&$5$&$\mathbb{P}^2_{\alpha,\beta}$&yes&\\
\S\ref{totally_real3}&$Aut(M_{22})$&$22$&$4$&rank 1 ell.\ curve&yes&\\
&$M_{22}$&$22$&$4$&&yes&derived from previous;\newline base curve $\mathbb{P}^1$\\
\end{tabular}
\label{overview}
\caption{Overview of the polynomials computed in this article}
\end{table}
Here, a Hurwitz variety $\mathbb{P}^1_{\alpha}$ leads to a one-dimensional family of covers, parameterized by $\alpha$, of the projective line $\mathbb{P}^1_t$, and therefore a two-parameter
polynomial $f(\alpha,t,x) \in \qq(\alpha,t)[x]$ with the prescribed Galois group. Similarly, the  Hurwitz variety $\mathbb{P}^2_{\alpha,\beta}$ in Section \ref{totally_real2} 
leads to a three-parameter polynomial. In the ``elliptic-curve" cases, one obtains the existence of an infinite family $f_P(t,x)$ of one-parameter polynomials, parameterized by the rational points $P$ of a rank-1
elliptic curve; for the sake of simplicity, only sample polynomials of these families are given. In some cases, polynomials for normal subgroups are derived in a natural way from polynomials with a given group. In these cases, it is to be understood in Table \ref{overview} that the Hurwitz variety
parameterizing the family of covers is the same as for the original group. 
\par
It should be noted that in several of the cases in Table \ref{overview} the precise nature of the Hurwitz variety and the base curves of the covering maps became clear only via explicit 
computation. In particular, the existence of rational points on the Hurwitz space as well as the rationality of the base curve of the respective covers - both necessary to obtain regular Galois 
realizations - was not always clear a priori. This will be addressed in more detail in Sections \ref{psl52}-\ref{totally_real3}. 
Before this, we will outline the theoretical background and the general techniques used for the computations.

\section{Theoretical background}
\label{RIGP}
We recall some basic facts about monodromy of covers, Hurwitz spaces and braid group action. For a deeper introduction, cf.\ \cite{FV}, \cite{RW} or \cite{Voe}.
\subsection{Covers of the projective line}
Let $S=\{p_1,...,p_r\}$ be a finite subset of the projective line $\mathbb{P}^1\cc$, $p_0\in \mathbb{P}^1\cc\setminus S$, and $f:R\to \mathbb{P}^1\cc\setminus S$ an $n$-fold covering map.
Then the fundamental group $\pi_1(\mathbb{P}^1\cc\setminus S, p_0)$ acts on the fiber $f^{-1}(p_0)$ via lifting of paths. This yields a homomorphism of $\pi_1(\mathbb{P}^1\cc\setminus S, p_0)$ into $S_n$,
and if $\gamma_i$ are homotopy classes of closed paths from $p_0$ around $p_i$ ($i=1,...,r$), ordered counter-clockwise in $\mathbb{P}^1\cc$, their images under this action, say 
$\sigma_1,...,\sigma_r$, generate a group isomorphic to the Galois group of $E\mid \cc(t)$, with $E$ being the Galois closure of the function field of (the compact Riemann surface) $R$.
Furthermore, we have $\sigma_1\cdots \sigma_r = 1$. We call $(\sigma_1,...,\sigma_r)$ the {\it branch cycle description} of the cover $f$.
The genus $g$ of $R$ is given by the Riemann-Hurwitz genus formula
$$g= -(n-1) + \frac{1}{2}\sum_{i=1}^r ind(\sigma_i),$$
where the index $ind(\sigma_i)$ is defined as $n$ minus the number of cycles of $\sigma_i\in S_n$.
This motivates the following definition:
\begin{defi}[Genus-$g$ tuple]
Let $G\le S_n$ be a transitive permutation group, $r\in \nn$ and $\sigma_1,...,\sigma_r \in G$ such that $\langle\sigma_1,...,\sigma_r\rangle = G$ and $\sigma_1\cdots \sigma_r=1$.
Then $(\sigma_1,...,\sigma_r)$ is called a genus-$g$ tuple of $G$, with $g:=-(n-1) + \frac{1}{2}\sum_{i=1}^r ind(\sigma_i)$.
\end{defi}

\subsection{Hurwitz spaces}
Let $G$ be a finite group. Let $S$ be a subset of the projective line $\mathbb{P}^1\cc$ of cardinality $r$, $p_0$ be any point in $\mathbb{P}^1\setminus S$
 and $f:\pi_1(\mathbb{P}^1\setminus S, p_0) \to G$ be an epimorphism mapping none of the canonical generators $\gamma_1,...,\gamma_r$ of the fundamental group to the identity. 
 On the set of such triples $(S,p_0,f)$ one defines an equivalence relation via $(S,p_0,f) \sim (S', p_0', f') :\Leftrightarrow S=S'$ and there exists a path $\gamma$ from $p_0$ to $p_0'$ in 
 $\mathbb{P}^1\setminus S$ such that the induced map $\gamma^\star: \pi_1(\mathbb{P}^1\setminus S, p_0) \to \pi_1(\mathbb{P}^1\setminus S, p_0')$ on the fundamental groups fulfills 
 $f'\circ \gamma^\star = f$.
Identifying the group $G$ with the deck transformation group of a Galois cover $\varphi: X \to \mathbb{P}^1\setminus S$, Riemann's existence theorem leads to a natural identification of 
these equivalence classes $[S,p_0,f]$ with equivalence classes $[\varphi,h]$, where $\varphi: X \to \mathbb{P}^1\setminus S$ is a Galois cover that can be extended to a branched cover of 
$\mathbb{P}^1$ with exactly $r$ branch points, and $h$ is an isomorphism from the group of deck transformations of $\varphi$ to $G$. Cf.\ \cite[Section 1.2.]{FV} 
(especially for the precise identification between the two different sets of equivalence classes) and \cite[10.1]{Voe}.\par
Denote the set of these equivalence classes by $\mathcal{H}^{in}_r(G)$.
This space carries a natural topological structure, and also the structure of an algebraic variety.
This directly links the inverse Galois problem with the existence of rational points on certain algebraic varieties.
The main result is the following (cf.\  \cite[Cor.\ 10.25]{Voe} and \cite[Th.\ 4.3]{DF2}):
\begin{satz}
\label{hurwitz_points}
Let $G$ be a finite group with $Z(G)=1$. There is a universal family of ramified coverings $\mathcal{F}:\mathcal{T}_r(G) \to \mathcal{H}_r^{in}(G)\times \mathbb{P}^1\cc$,
such that for each $h\in \mathcal{H}_r^{in}(G)$, the fiber cover $\mathcal{F}^{-1}(h) \to \mathbb{P}^1\cc$ is a ramified Galois cover with group $G$.
This cover is defined regularly over a field $K \subseteq \cc$ if and only if $h$ is a $K$-rational point. In particular, the group $G$ occurs regularly as a Galois group over $\mathbb{Q}$ 
if and only if $\mathcal{H}_r^{in}(G)$ has a rational point for some $r$.
\end{satz}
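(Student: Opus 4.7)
The overall plan is to build $\mathcal{H}_r^{in}(G)$ as a complex-analytic fibration over the unordered configuration space of $r$ points on $\mathbb{P}^1\cc$, use Riemann's existence theorem to glue together a universal family on top of it, algebraize via GAGA, and then read off the rationality criterion through Weil descent. The hypothesis $Z(G)=1$ will enter exactly to ensure that the moduli problem is fine, killing the ambiguity in the descent step.

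First I would fix the configuration space $U_r := ((\mathbb{P}^1\cc)^r \setminus \Delta)/S_r$ of $r$-element subsets $S \subset \mathbb{P}^1\cc$ and verify that $S \mapsto \{[S,p_0,f]\}$ assembles into a finite topological covering $\pi : \mathcal{H}_r^{in}(G) \to U_r$. The fibre over a fixed $S$ is the set of $\mathrm{Inn}(G)$-orbits of epimorphisms $\pi_1(\mathbb{P}^1\cc \setminus S, p_0) \twoheadrightarrow G$ sending each canonical generator away from $1$, and the monodromy of $\pi$ is the classical Hurwitz braid action on Nielsen tuples. Finiteness of the fibres together with local triviality (both standard from the braid-group description) equip $\mathcal{H}_r^{in}(G)$ with the structure of a complex manifold.

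Next I would construct the universal family. Over a small neighbourhood of $h = [S,p_0,f] \in \mathcal{H}_r^{in}(G)$, Riemann's existence theorem produces a $G$-Galois cover of $\mathbb{P}^1\cc$ branched over $S$. This is where the assumption $Z(G)=1$ plays the key role: every automorphism of such a cover commuting with the $G$-action comes from $Z(G)$, so the covers in question are rigid with trivial $G$-equivariant automorphism group. This rigidity lets the local analytic families be glued unambiguously into $\mathcal{F}:\mathcal{T}_r(G) \to \mathcal{H}_r^{in}(G) \times \mathbb{P}^1\cc$, exhibiting $\mathcal{H}_r^{in}(G)$ as a \emph{fine} (and not merely coarse) moduli space. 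GAGA then upgrades the analytic construction to an algebraic morphism of varieties.

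Finally I would prove the descent statement by letting $\mathrm{Gal}(\overline{K}/K)$ act on both $\overline{K}$-points of $\mathcal{H}_r^{in}(G)$ and on the covers themselves, compatibly with $\mathcal{F}$. The easy direction is that a $K$-rational $h$ yields a $K$-model by pulling $\mathcal{F}$ back along $\{h\} \times \mathbb{P}^1$. The converse is the main obstacle: given a cover defined over $K$ one must produce a Galois-equivariant descent datum on the universal family, which by Weil's criterion amounts to showing that the natural cocycle $\mathrm{Gal}(\overline{K}/K) \to \mathrm{Aut}_G(\text{cover})$ is trivial; again, triviality of $Z(G)$ removes the ambiguity in this cocycle and yields descent. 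The final assertion about regular realisations over $\qq$ then reduces to the translation: a regular $G$-extension of $\qq(t)$ is the same as a $\qq$-defined Galois cover of $\mathbb{P}^1_{\qq}$, equivalently a $\qq$-rational point on $\mathcal{H}_r^{in}(G)$ for some $r$. The hardest technical work sits in this Weil-descent step, which is precisely where I would lean on \cite[Cor.\ 10.25]{Voe} and \cite[Th.\ 4.3]{DF2} rather than redo it from scratch.
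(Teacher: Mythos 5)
The paper offers no proof of this theorem at all --- it is quoted from the literature with the references \cite[Cor.\ 10.25]{Voe} and \cite[Th.\ 4.3]{DF2} --- and your outline is a faithful sketch of exactly that standard argument: $\mathcal{H}_r^{in}(G)$ as a finite topological cover of the configuration space with the braid group as monodromy, Riemann's existence theorem plus $Z(G)=1$ to make the moduli problem fine and glue a universal family, algebraization, and Weil descent for the rationality criterion. The one slip is that you have the two directions of the rationality criterion reversed in terms of difficulty: the real work (where $Z(G)=1$ and the Weil cocycle argument enter) is in descending the universal family itself to $\qq$, after which \emph{both} ``$h$ is $K$-rational $\Rightarrow$ $K$-model'' and its converse are essentially formal --- the direction you single out as the main obstacle is just the easy inclusion of the field of moduli in any field of definition.
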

Monodromy action leads to a group theoretic interpretation of the above equivalence classes of covers.
\begin{defi}[Nielsen class]
\label{Nielsen_class}
Let $G$ be a finite group, $r\ge 2$ and $$\mathcal{E}_r(G):=\{(\sigma_1,...,\sigma_r) \in (G\setminus\{1\})^r \mid \sigma_1 \cdot ... \cdot \sigma_r = 1, \langle \sigma_1,...,\sigma_r\rangle=G\}$$ 
the set of all generating $r$-tuples in $G\setminus\{1\}$ with product 1. Furthermore let $\mathcal{E}_r^{in}(G)$ be the quotient of $\mathcal{E}_r(G)$ modulo conjugating the tuples simultaneously 
with elements of $G$.\par
For any $r$-tuple $C:=(C_1,...,C_r)$ of non-trivial conjugacy classes of $G$ the Nielsen class $Ni(C)$ is defined as the set of all $(\sigma_1,...,\sigma_r)\in \mathcal{E}_r(G)$ such that for 
some permutation $\pi \in S_r$ it holds that $\sigma_i \in C_{\pi(i)}$ for all $i \in \{1,...,r\}$. 
The definition of $Ni^{in}(C)$ is then possible in analogy to the above notation.
\end{defi}
Denote by $\mathcal{H}_r$ the Hurwitz braid group on $r$ strands. This group, a quotient of the Artin braid group, can be defined as the group
generated by $r-1$ elements $\beta_1,...,\beta_{r-1}$ fulfilling the classical braid relations 
$$\beta_i\beta_j=\beta_j\beta_i \quad \text{ for } 1\le i<j-1\le r-2,$$
$$\beta_{i}\beta_{i+1}\beta_{i}=\beta_{i+1}\beta_{i}\beta_{i+1} \quad\text{ for } 1\le i\le r-2$$
and the additional relation $$\beta_1\cdots\beta_{r-1}\beta_{r-1}\cdots \beta_1 = 1$$
(cf.\ \cite[Chapter III.1.1 and III.1.2]{MM}).
The group $\mathcal{H}_r$ acts naturally on the set $\mathcal{E}_r(G)$ (with an induced action on $\mathcal{E}_r^{in}(G)$) via
\begin{equation}
\label{braid_action}
(\sigma_1,...,\sigma_r)^{\beta_i} := (\sigma_1,...,\sigma_{i-1}, \sigma_i\sigma_{i+1}\sigma_i^{-1},\sigma_{i},...,\sigma_r), \text{\ for } i=1,...,r-1.
\end{equation}
It is obvious that the sets $Ni^{in}(C)$ are unions of orbits under these actions.\par
Furthermore, if $\mathcal{U}_r$ denotes the space of $r$-sets in $\mathbb{P}^1\cc$ and $\Psi: \mathcal{H}^{in}(G)\to \mathcal{U}_r$ is the branch point reference map,
the elements of a given fiber are in 1-1 correspondence with elements of $\mathcal{E}_r^{in}(G)$. 
Indeed, the above action on equivalence classes of $r$-tuples of elements of $G$ is, via this correspondence, essentially the same as the action of the fundamental group on the fiber via lifting of paths.
Each of the orbits of the braid group acting on $Ni^{in}(C)$ corresponds to a connected component of $\mathcal{H}^{in}_r(G)$. 
The union of all connected components corresponding to $Ni^{in}(C)$ is what is usually referred to as a Hurwitz space:
\begin{defi}[Hurwitz spaces]
For an $r$-tuple $C$ of conjugacy classes of a group $G$ with a non-empty Nielsen class $Ni^{in}(C)$, the union of components of $\mathcal{H}^{in}_r(G)$ corresponding to $Ni^{in}(C)$ is 
called the (inner) Hurwitz space of $C$. 
\end{defi}
If one leaves out the permutation $\pi$ in the above definition of a Nielsen class, one gets the notion of a straight Nielsen class: 
\[SNi(C):=\{(\sigma_1,...,\sigma_r) \in \mathcal{E}_r(G) \mid \sigma_i \in C(i) \text{\ for } i=1,...,r\}\]
The definition of $SNi^{in}(C)$ is then possible in analogy to Def.\ \ref{Nielsen_class}.\par
Now always assume that $Z(G)=\{1\}$, and that the braid group action on $SNi^{in}(C)$ is transitive.\footnote{This condition assures that the Hurwitz space is an absolutely irreducible variety over its 
field of definition. But even in the case of intransitive braid group action, there may still be an absolutely irreducible component, granted that there is a ``rigid" braid orbit, 
 e.g.\ a unique orbit of a given length.} 
Following \cite[Theorem 4.3]{DF2}, one has the following morphisms between (quasi-projective) varieties:
\begin{itemize}
 \item $\mathcal{F}: \mathcal{T} \to \mathcal{H}^{in}(C) \times \mathbb{P}^1$, the universal family of covers in the Nielsen class $Ni^{in}(C)$.
 \item $\mathcal{H}^{in}(C) \to \mathcal{U}_r$, mapping each point of $\mathcal{H}^{in}(C)$ to its set of branch points. 
 \item Proceeding to the pullback $(\mathcal{H}^{in})'(C):=\mathcal{H}^{in}(C) \times_{\mathcal{U}_r} \mathcal{U}^r$, one also obtains a morphism $(\mathcal{H}^{in})'(C) \to \mathcal{U}^r$, 
 with $\mathcal{U}^r$ the space of {\it ordered} $r$-sets in $\mathbb{P}^1\cc$.
 \item Via $PGL_2$-action, $(\mathcal{H}^{in})'(C)$ is birationally equivalent to $\mathbb{P}^1\cc \times \mathbb{P}^1\cc \times \mathbb{P}^1\cc \times \mathcal{H}^{red}(C)$, where 
 $\mathcal{H}^{red}(C)$ is the image under the above map of the subvariety of $(\mathcal{H}^{in})'(C)$ consisting of covers with the first three branch points equal to $0$, $1$, and $\infty$ 
 (in this order).
\item This restriction gives a morphism of $r-3$-dimensional varieties $\mathcal{H}^{red}(C) \to \mathcal{U}^{r-3}$.
\end{itemize}
Particularly in the case $r=4$, $\mathcal{C}:=\mathcal{H}^{red}(C)$ is a curve - it corresponds, via action of $PGL_2(\mathbb{C})$,
to the set of all covers with branch cycle description in $C$ and ordered branch point set $(0,1,\infty,\lambda)$, for some $\lambda\in \cc\setminus\{0,1\}$ (Of course,
this choice of branch points cannot always be assumed for covers defined over $\mathbb{Q}$; therefore one may consider covers with partially symmetrized branch point sets as well - cf.\ Chapter
III.7 in \cite{MM}).
The existence of Galois covers defined over a field $K$ is therefore directly linked to the existence of 
$K$-points on such curves (often called reduced Hurwitz spaces). We also refer to these reduced Hurwitz spaces as Hurwitz curves.
There are well known theoretical criteria to determine the genus of these Hurwitz curves, cf.\ e.g.\ Thm. III.7.8 in \cite{MM}.

\section{Computational methods}
\label{Comp}
\subsection{Deformation of genus zero covers}
\label{deform}
Let $Ni(C)$ be a Nielsen class of genus zero $4$-tuples generating a finite group $G$ (assume always $Z(G)=\{1\}$). 
Recall from Section \ref{RIGP} that, if $SNi^{in}(C)$ contains a unique orbit of length $n$ under the action of the braid group, $\mathcal{H}$ is the corresponding connected component of the (inner) 
Hurwitz space and $\mathcal{H}'$ its pullback over $\mathcal{U}^4$, then there is a natural degree-$n$ cover  $\mathcal{H}' \to \mathcal{U}^4$, where $\mathcal{H}'$ is birationally equivalent to 
$\mathcal{C} \times (\mathbb{P}^1\mathbb{C})^3$, and a degree-$n$ cover $\mathcal{C} \to \mathbb{P}^1\mathbb{C}$ of (irreducible projective non-singular) curves. 
If, via Moebius transformations, one fixes three of the four branch points of the genus zero covers, say to $0$, $1$ and $\infty$, one obtains a family of branched covers 
$\mathcal{T}_0 \to \mathcal{C} \times \mathbb{P}^1\mathbb{C}$. Let $t$ be a parameter for the projective line on the right side, then this family will have ordered ramification locus in $t$: 
$(0,\lambda,1,\infty)$, where $\lambda$ is a function on $\mathcal{C}$. 
As $\mathcal{C}$ is an irreducible curve, its function field is of one variable (and of degree $n$ over $\cc(\lambda)$), i.e.\ equal to $\cc(\lambda, \alpha)$ for some function $\alpha$.
Therefore the family $\mathcal{T}_0 \to \mathcal{C} \times \mathbb{P}^1\mathbb{C}$ can be expressed by a polynomial equation $f(\lambda,\alpha, t, X)=0$, where $f \in \cc(\lambda, \alpha)[t,X]$
is linear in $t$ (because of the genus zero condition).
For every specialization $t\mapsto t_0$ (e.g.\ to a ramification point), the coefficients of $f(\lambda,\alpha, t_0, X)$ lie in the function field $\cc(\lambda, \alpha)$.
To determine these coefficients, embed $\cc(\lambda)$ into the Laurent series field $\cc((\lambda))$. Then, using the fact that the finite extensions of $\mathbb{C}((\lambda))$ are all equal 
to some $\mathbb{C}((\mu))$ with $\mu^e=\lambda$, for some $e \in \mathbb{N}$ (cf.\  \cite[Chapter 2.1.3]{Voe}), all of these coefficients have a Puiseux expansion in $\lambda$, 
i.e.\ can be written as a Laurent series in $\mu:=\lambda^{\frac{1}{e}}$ with some $e\in \mathbb{N}$.
Here the exponent $e$ is nothing but the ramification index in the Hurwitz space of some place lying over $\lambda \mapsto 0$. 
This ramification index can be determined by group theoretical means: it is the number of equivalence classes of covers, i.e.\ of equivalence classes of $4$-tuples 
$(\sigma_1,\sigma_2,\sigma_3,\sigma_4)$ in $SNi^{in}(C)$, that lead to the same degenerate cover, i.e.\ class triple $(\sigma_1\sigma_2,\sigma_3,\sigma_4)$, upon letting $\lambda$ converge to zero.
\par
There are two important cases for practical computations:
\begin{itemize}
 \item If one knows an explicit polynomial for some degenerate ($3$-point) cover with monodromy $(\sigma_1\sigma_2,\sigma_3,\sigma_4)$ as above, one can determine $e$ 
 and then develop Puiseux expansions to regain a cover with $4$ branch points.
  The idea is to gain a sufficiently good initial approximation and then use Newton iteration to develop the series.
 A point in a given fiber of the non-degenerate cover which converges to a multiplicity-$k$ point $X\mapsto x_0$ of the degenerate cover will be of the form $X\mapsto x_0 + O(\mu^{e/k})$.
 To reach the necessary precision of the initial approximation, one needs to determine the unknown first-order coefficient.
 This is achieved by finding equations for the ``opposite" degeneration with monodromy $(\sigma_1, \sigma_2,\sigma_3\sigma_4)$, corresponding to $\mu\to 0$ from the viewpoint of a new 
 parameter $s:=t/\lambda$.
  \par
 A detailed description of this method has been given by Couveignes in \cite{Cou}, and an explicit Magma algorithm
 is contained in \cite{Koe}. Compare also the examples in the later sections, especially Section \ref{psl52_deform}.
\item If one even knows an explicit polynomial for some non-degenerate ($4$-point) cover of the family (say, ramified in $t\mapsto (0,1,\infty,a)$ for some $a\in \cc\setminus\{0,1\}$), 
then by mapping the branch points of the family to $t \mapsto (0,1,\infty,a+\lambda)$ one can develop from an {\it unramified} point, 
i.e.\ actually obtain Laurent series in $\lambda$ for the above coefficients. 
As one starts from a non-ramified point on the Hurwitz space, there is also no concern of getting into the Hurwitz space of a wrong four-point family by deforming, 
so computations can be done modulo suitable primes (as one doesn't need to double-check the monodromy via numerical methods in $\cc$).
However, for groups of larger degree, one cannot expect to directly find a polynomial for a non-degenerate cover, as the corresponding system of equations becomes too complicated.
\end{itemize}
{\bf Remark:}
\begin{itemize}
\item[a)] Of course all this remains true for $r$-tuples with $r\ge 5$ as well. In this case one either has to increase the transcendence degree to get the full Hurwitz space, 
or work at first only with a curve on the Hurwitz space, by fixing $r-1$ branch points in $t$ (in the unsymmetrized case).
 \item[b)] So far, all considerations were made over $\mathbb{C}$. However, for suitable choice of the conjugacy classes in $Ni(C)$, the %universal family of covers, as well as the
 corresponding Hurwitz space can sometimes be defined over $\qq$. The Puiseux expansion approach may therefore be carried out over an appropriate number field.
 \item[c)] The above condition on the ordered ramification locus in $t$ to be $t\mapsto (0,1,\infty,\lambda)$ corresponds to the unsymmetrized case; 
 analogously, suitable Moebius transformations lead to different symmetrized cases; e.g.\ in the $C_2$-symmetrized case one can w.l.o.g.\ consider all covers with ordered ramification locus 
 $(\{\text{ zeroes of } t^2-\lambda\}, 1,\infty)$, etc.
 \end{itemize}
%\end{remark}
%
%
\subsection{Finding algebraic dependencies}
\label{algdep}
Assume for simplicity that the reduced Hurwitz space (obtained from $\mathcal{H}^{in}(C)$ via $PGL_2$-action) for a given family of covers with $r$ branch points can be defined over 
$\qq$.\footnote{Otherwise one gets the analogous results over some number field $K$.}
As this reduced Hurwitz space is an $(r-3)$-dimensional algebraic variety, its function field has transcendence degree $r-3$. Therefore, any $r-2$ elements of this function field must fulfill 
a non-trivial algebraic equation over $\qq$. In particular, the coefficients of an equation $f(t,x)=0$ for the corresponding universal family of covers (cf.\ the following sections) 
are such elements. This enables one to obtain explicit equations defining the Hurwitz space over $\qq$.\par
Again, for sake of simplicity, assume $r=4$, then the function field extension corresponding to the reduced Hurwitz space cover is of the form $F:=\qq(\lambda,\alpha)|\qq(\lambda)$, 
with a function field $F$ of one variable. The Puiseux expansion approach has embedded $F$ into the Laurent series field $K((\lambda^{1/e}))$ (for a suitable $e\in \mathbb{N}$ and a suitable number field $K$).
There are now different ways to obtain dependencies between two coefficients $\alpha_1,\alpha_2$ of the model. 
Under certain additional conditions, it will be clear that $\qq(\alpha_1,\alpha_2)$ is already the full function field $F$ and therefore the algebraic dependency between $\alpha_1$ and $\alpha_2$ is 
actually a defining equation for the Hurwitz curve. E.g., if the braid group acts primitively on the given Nielsen class, 
then there is no intermediate field between $F$ and $\qq(\lambda)$, so $\alpha_1:=\lambda$ and $\alpha_2$ any coefficient not contained in $\qq(\lambda)$ will suffice. 
This is usually not the best try, as $[F:\qq(\lambda)] = |SNi^{in}(C)|$ is often considerably larger than some other degrees $[F:\qq(\alpha_i)]$ 
(see the next section for theoretical results on the gonality of $F$).\par
The following approaches will be used in the following sections to obtain algebraic dependencies (cf.\ also Section 5 of \cite{Cou}):
\begin{itemize}
 \item[1)] If the coefficients $\alpha_i$ are actually given as Laurent series in $\mu:=\lambda^{1/e}$, simply solve a system of linear equations in order to see whether $\alpha_1,\alpha_2$ fulfill a 
 polynomial equation of degrees $n_1,n_2$ respectively. As such an equation has $N:=(n_1+1)(n_2+1)$ unknowns, series need to be expanded to precision at least $\mu^N$ in order to obtain sufficiently 
 many equations via comparison of coefficients. 
\par
An explicit (and precise!) Laurent series expansion is usually difficult to obtain over $\qq$, as the coefficients grow quite rapidly.
Therefore this approach, at least for dependencies of high degrees, can often be only obtained modulo some prime.
\item[2)] Once the degrees for algebraic dependencies are known (or can be conjectured, e.g.\ after mod-$p$ reduction), 
the corresponding systems of linear equations can also be solved numerically for complex approximations, with many different specialized values for $\lambda$, 
instead of one high-order Laurent series in $\lambda$. 
\item[3)] Instead of solving approximate complex equations numerically, a mod-$p$ solution can be lifted to many different solutions in $\zz_p$. 
The algebraic dependencies can then be retrieved via interpolation. 
\item[4)] If the degrees are not too high, algebraic dependencies can be obtained from complex approximations via the LLL-algorithm (see \cite{LLL}): 
suppose that $\alpha_1$, $\alpha_2$ fulfill a rational polynomial equation of degrees $n_1$ and $n_2$ respectively, specializing $\alpha_1$ to a rational value will leave $\alpha_2$ 
in a number field of degree at most $n_2$ over $\qq$. With sufficient precision, we managed to retrieve the minimal polynomials for these specialized values of $\alpha_2$ for degrees $n_2$ up to 100. 
Again, repeating this for many (at least $n_1+1$) different specializations for $\alpha_1$ will allow interpolation to retrieve the original equation.
\end{itemize}
{\bf Remark:}\\
Especially for larger braid orbits, with braid genus $g>0$, it may not always be possible to directly find algebraic dependencies for {\it all} coefficients occurring in an equation for the universal 
family (as some of these dependencies may be of very large degree). Therefore, in order to check whether a rational solution of some algebraic equation really corresponds to a ``good" point on the 
Hurwitz space (and not to a point on the boundary with degenerate monodromy!) one may have to find this point by moving through the Hurwitz space using Newton iteration.
To do this, one can use the monodromy action of the Hurwitz braid group (as the fundamental group of the space $\mathcal{U}_r$) in order to gain, from an approximation for a cover with branch cycle 
description $(\sigma_1,...,\sigma_r)$, approximations for all covers with the same set of branch points and branch cycle description in the same braid orbit.
E.g., applying the braid $\beta_i$ to a given cover with ordered branch point set $(p_1,...,p_r)$ corresponds to switching the $i$-th and the $(i+1)$-th branch point by moving each of them by 180 degree
on the the disc around $\frac{p_{i}+p_{i+1}}{2}$ with radius $|\frac{p_{i}-p_{i+1}}{2}|$ (assuming this disk contains no other branch points). See \cite[Lemma 10.9]{Voe}.
%\end{remark}
%
\subsection{Considerations about the gonality of function fields}
Usually the algebraic dependencies $f(a,b)=0$ will not be optimal with regard to the degrees of the variables $a,b$ involved. One can therefore use considerations about the gonality of the function field $K(a,b)$, 
involving computations of Riemann-Roch spaces, to find good parameters, i.e.\ rational function fields with low index in the function field $K(a,b)$.
This is especially useful in function fields of genus $0$ or $1$, or in hyperelliptic function fields.
\begin{defi}[Gonality]
Let $F|K$ be a function field of one variable. The gonality $gon(F|K)$ of $F|K$ is defined as the minimum of the degree $[F:K(x)]$ (for $x\in F$), i.e.\ the minimal index of a rational function field in $F$.
\end{defi}
We use the following estimates on the gonality of function fields, which also yield a method to explicitly find rational function fields $K(x) \subseteq F$ of low index.
\begin{lemma}
\label{gon}
 Let $g$ be the genus of the function field $F|K$. Then
\begin{enumerate}
 \item[a)] If $g=0$, then $gon(F|K) \le 2$.
 \item[b)] If $g\ge 2$, then $gon(F|K) \le 2g-2$.
 \item[c)] If $F|K$ has a prime divisor of degree one, then $gon(F|K) \le g+1$.
 \item[d)] If in addition $g\ge 2$, then $gon(F|K) \le g$.
\end{enumerate}
\end{lemma}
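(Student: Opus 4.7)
The strategy is uniform across (a)--(d): for any non-constant $x\in F$ one has $[F:K(x)] = \deg((x)_\infty)$, so to conclude $gon(F|K)\le n$ it suffices to produce a divisor $D$ on $F$ of degree $\le n$ with $\ell(D)\ge 2$. Indeed, once $\ell(D)>0$, $D$ is linearly equivalent to an effective divisor $D'$ of the same degree, with $L(D')=L(D)$, and any non-constant $x\in L(D')$ then satisfies $(x)_\infty\le D'$, giving $\deg((x)_\infty)\le\deg D$. The whole proof thus reduces to choosing $D$ judiciously in each case and computing $\ell(D)$ via the Riemann-Roch theorem.

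For (a), I take $D$ in the anticanonical class $-W$, of degree $2$; Riemann-Roch gives $\ell(-W) = 2 + 1 - 0 + \ell(2W) = 3$, since $\deg(2W) = -4 < 0$. For (b), I take $D=W$, with $\deg W = 2g-2$ and $\ell(W)=g\ge 2$. For (c), I take $D=(g+1)P$; Riemann-Roch yields $\ell(D)\ge (g+1) + 1 - g = 2$. The most delicate case is (d): here I would take $D$ in the class of $W-(g-2)P$, which has degree $g$, and compute
\[
\ell\bigl(W-(g-2)P\bigr) \;=\; g + 1 - g + \ell\bigl((g-2)P\bigr) \;=\; 1 + \ell\bigl((g-2)P\bigr) \;\ge\; 2,
\]
using that $(g-2)P$ is effective (or the zero divisor) for $g\ge 2$, so $\ell((g-2)P)\ge 1$ from the constants.

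\emph{Main obstacle.} The only genuinely delicate step is in (d), where $W-(g-2)P$ is a priori not effective: its linear equivalence to an effective divisor of the same degree $g$ is exactly what the strict positivity $\ell(W-(g-2)P)>0$ guarantees, and only after this replacement can one read off the bound $(x)_\infty\le D'$ with $\deg D' = g$. The other three parts are one-line applications of Riemann-Roch, once the basic identity $[F:K(x)] = \deg((x)_\infty)$ is in hand.
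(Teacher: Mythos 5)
Your proof is correct: in each case you produce a divisor of the required degree with at least two linearly independent sections, and your handling of the one subtle point (replacing the a priori non-effective divisors $-W$ and $W-(g-2)P$ by effective representatives before reading off the pole divisor bound) is exactly right. The paper itself gives no proof but defers to \cite[Lemma 6.6.5]{J}; your Riemann--Roch argument is the standard one found there, so there is nothing to add.
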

See \cite[Lemma 6.6.5]{J} for the proof. In each of the cases of Lemma \ref{gon}, computation of suitable Riemann-Roch spaces yields explicit elements $x\in F$ with $[F:K(x)]$ at most the bound given in the 
respective case.
\subsection{Galois group verification}
Once an exact polynomial equation (over $\mathbb{Q}$ or another number field) for a member of a given family of covers - or even for the entire family - has been found, it is necessary to 
verify the Galois group, especially considering that significant parts of the computations were based on numerical approximations. There are several easy ways to gain evidence for the Galois 
group. One of these is the computation of the monodromy by numerical means; this is a solid tool, although not an exact method - and
turning it into one requires considerable efforts.
However, in all the cases covered in the following sections, the structure of the Galois group allows for rigorous proofs, which are therefore given in detail. 
\par
The following sections will apply the theoretical and computational background to several examples of interest.
For each example, the structure will roughly follow the sequence of Sections \ref{RIGP} and \ref{Comp}:
firstly, a presentation of the properties of the Hurwitz family resp.\ braid orbit in question, followed by a description of the concrete techniques applied for deformation of covers and 
retrieving algebraic dependencies; finally, a presentation of explicit polynomials and verification of their Galois group.
\section{A family of polynomials with Galois group $PSL_5(2)$ over $\qq(t)$} 
\label{psl52}
We compute a family of coverings with four ramification points, defined over $\qq$, with regular Galois group $PSL_5(2)$.
This yields the (to my knowledge) first explicit polynomials with group $PSL_5(2)$ over $\qq(t)$. 
\subsection{A theoretical existence argument}
The group $PSL_5(2)$ does not have any rigid triples of rational conjugacy classes, and among the genus zero systems of rational class 4-tuples, there is only one with a Hurwitz curve of genus zero.
This curve will turn out to be rational in the course of the explicit computations, but this does not seem to be immediately clear by the standard braid orbit criteria (see below). 
However, if one looks at class 5-tuples, it is possible to obtain $PSL_5(2)$ as a regular Galois group over $\qq$ via purely theoretical arguments:
\begin{prop}
The inner Hurwitz space for the class 5-tuple $(2A,2A,2B,2B,3B)$ of $PSL_5(2)$ contains a rational curve over $\qq$, and therefore infinitely many $\qq$-points.
\end{prop}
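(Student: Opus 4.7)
The plan has three steps: verify that $Ni^{in}(C)$ is populated, deduce that the inner Hurwitz space is defined over $\qq$, and then locate a rational curve on it.

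First I would check that $Ni^{in}(C)$ for $C=(2A,2A,2B,2B,3B)$ is non-empty and contains generating tuples. This is a finite calculation: apply the Frobenius character formula to the character table of $PSL_5(2)$ to count tuples $(\sigma_1,\dots,\sigma_5)\in C_1\times\cdots\times C_5$ with $\prod\sigma_i=1$, and subtract the contributions from the maximal subgroups of $PSL_5(2)$ to obtain the count of generating tuples. The same count gives the degree $|SNi^{in}(C)|$ of the universal cover $\mathcal{H}^{in}_5(C)\to\mathcal{U}_5$. Next, since all three classes involved are $\qq$-rational (involutions trivially; the $3B$-class by inspection of the character table) and $Z(PSL_5(2))=1$, the branch cycle argument combined with Theorem~\ref{hurwitz_points} shows that $\mathcal{H}^{in}_5(C)$ and each of its Galois-stable components are defined over $\qq$.

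The main and most delicate step is to produce a rational curve on this $(r-3)=2$-dimensional Hurwitz surface. I would compute the action of the Hurwitz braid group $\mathcal{H}_5$ on $SNi^{in}(C)$ via formula (\ref{braid_action}), identify one braid orbit, so that the corresponding component is absolutely irreducible, and then restrict to a carefully chosen 1-parameter sub-family of branch configurations. After moving three branch points to $\{0,1,\infty\}$ by $PGL_2$, a natural strategy is to exploit the $S_2\times S_2$-symmetrization of the two pairs of identical classes and let the remaining two branch points lie in a symmetric configuration (for example $\{\pm\alpha\}$ together with a fixed placement of the other pair, exploiting the rationality of the symmetrized branch divisor). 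The Riemann--Hurwitz formula applied to the restricted braid action determines the genus of the resulting sub-Hurwitz curve, and one hunts for a specialization producing a genus-$0$ component with a $\qq$-rational point.

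The hard part will be precisely this last step, since generic 1-dimensional slices of a Hurwitz surface tend to have positive genus. A useful fallback is boundary analysis: consider the codimension-$1$ stratum in which two branch points collide (say two $2A$-points merging to produce an element $\sigma\tau$ with $\sigma,\tau\in 2A$), yielding a $4$-point Hurwitz space on the boundary. If this boundary component contains a $\qq$-rational component, then the normal direction, which parameterizes the infinitesimal separation of the colliding pair, sweeps out a rational curve transverse to the boundary inside $\mathcal{H}^{in}_5(C)$. In either scenario, once such a rational curve is exhibited, the existence of infinitely many $\qq$-rational points is immediate.
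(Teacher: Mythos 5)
Your proposal does not actually close the argument: the decisive step --- exhibiting a rational curve on the two-dimensional inner Hurwitz space --- is left as a search (``one hunts for a specialization producing a genus-$0$ component with a $\qq$-rational point''), and you yourself note that generic one-dimensional slices will have positive genus. Nothing in the proposal guarantees that any of the symmetrized slices you describe has genus zero, nor that a genus-zero slice defined over $\qq$ actually has a rational point (for that one needs a divisor of odd degree, e.g.\ a braid generator with a cycle of odd length in its action on the Nielsen class --- a criterion you never invoke). The fallback via boundary strata is also not a proof: a $\qq$-rational component of the four-point boundary stratum gives you a $\qq$-point of the compactified space and a formal (Puiseux-series) neighbourhood, which is exactly what the paper uses for \emph{numerical} deformation, but it does not produce an algebraic rational curve through that point inside $\mathcal{H}^{in}_5(C)$; the curve swept out by ``infinitesimal separation of the colliding pair'' need not be rational, or even algebraic of low genus.

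The paper proves the proposition by a different and genuinely load-bearing idea that your plan is missing: it does not work inside $PSL_5(2)$ at all, but passes to $Aut(PSL_5(2))=PSL_5(2).2$ and a class \emph{4-tuple} $(2A,2B,2C,6A)$ there, whose reduced Hurwitz space is a curve rather than a surface. For that 4-tuple the braid orbit is a single orbit of length $46$, the braid genus is zero, and an oddness condition on the cycle structure of the braid generators yields a place of odd degree, hence a $\qq$-rational genus-zero curve. Each $\qq$-point gives a regular $Aut(PSL_5(2))$-cover; restricting to the $PSL_5(2)$-fixed field, which is a rational function field of degree $2$ over the base, turns the four branch points into the five-point class tuple $(2A,2A,2B,2B,3B)$ of $PSL_5(2)$ --- the 5-tuple is a ``rational translate'' of the 4-tuple. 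The image of the rational Hurwitz curve of the 4-tuple under this translation is the required rational curve on $\mathcal{H}^{in}_5(C)$. If you want to repair your argument without this descent trick, you would have to actually compute the braid action on a specific symmetrized slice and verify both the genus and the existence of an odd-degree divisor; as written, the proposal establishes only the (comparatively easy) facts that the Nielsen class is nonempty and the Hurwitz space is defined over $\qq$.
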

\begin{proof}
This 5-tuple of classes arises as a rational translate of a 4-tuple of classes in $Aut(PSL_5(2))$. 
This 4-tuple (of classes $(2A,2B,2C,6A)$) has a single braid orbit of length 46; its Hurwitz curve is of genus zero, and the images of the braids in the action on this orbit fulfill
an oddness condition to guarantee the rationality of this genus zero curve. \par Every $\qq$-point of this rational curve realizes $Aut(PSL_5(2))$ regularly over $\qq$, 
and as the $PSL_5(2)$-fixed field of such a realization is a rational function field (of degree 2 over the base field), one also obtains $PSL_5(2)$.
\end{proof}
As the explicit computation of such a field extension requires the computation of $PSL_5(2)$-covers with 5 branch points, we content ourselves with a 4-point family in the following.
Note however, that the deformation methods of Section \ref{deform} could be used to obtain members of the above 5-point family from the 4-point one.
\subsection{Data of a Hurwitz family}
Let $G=PSL_5(2)$ in its natural permutation action on 31 points, and denote by $2A$ the class of involutions of cycle type $(2^8.1^{15})$, 
by $3B$ the class of elements of order 3 with cycle type $(3^{10}.1)$ in $G$, and by $8A$ the unique class of elements of order 8 in $G$ (of cycle type $(8^2.4^3.2.1)$).
We consider the straight Nielsen class $SNi(C)$ of class tuples of length 4, of type $(2A,2A,3B,8A)$ in $G=PSL(5,2)$, generating $G$ and having product $1$, i.e.\ 
\[SNi(C):=\{(\sigma_1,...,\sigma_4) \in G\mid \sigma_1,\sigma_2 \in 2A, \sigma_3\in 3B, \sigma_4 \in 8A, \langle \sigma_1,...,\sigma_4\rangle = G, \sigma_1 \cdots \sigma_4 = 1\}\]
In the notation of Section \ref{RIGP}, we have $|SNi^{in}(C)|=24$. The action of the braid group on $SNi^{in}(C)$, as given in Equation (\ref{braid_action}), 
is transitive and more precisely yields that there is a family of covers $\mathcal{T} \mapsto \mathcal{C} \times \mathbb{P}^1\mathbb{C}$, where $\mathcal{C}$ (the $C_2$-symmetrized reduced Hurwitz space) 
is an absolutely irreducible curve of genus zero, and for every $h \in \mathcal{C}$ the corresponding fiber cover is a Galois cover of $\mathbb{P}^1\mathbb{C}$ with Galois group $PSL_5(2)$.
\par
Although the usual braid genus criteria yield that the $C_2$-symmetrized Hurwitz space for this family is a genus-zero curve, it does not seem clear via standard theoretical considerations 
(e.g.\ odd cycle argument for the braid group generators, as in \cite[Chapter III. 7.4.]{MM}) whether it can also be defined as a rational curve over $\qq$. 
In particular, the cycle structure of the braid orbit generators acting on the Nielsen class does not yield any places of odd degree. 
More precisely, the image of the braid group is imprimitive on the 24 points, with 12 blocks of length 2 (i.e.\ if $F|\qq(t)$ is the corresponding function field extension, of degree 24, 
we have an inclusion $\qq(t)\subset E \subset F$, with $[E:\qq(t)]=12$ and $[F:E]=2$). As the images in the action on the blocks of the three braids defining the ramification structure of these fields 
have cycle structure $(4^2.3.1)$, $(7.3.2)$ and $(2^5.1^2)$ respectively, it is clear that $E$ is still a rational function field; however the cycle structure of the latter involution in the action on
24 points is $(2^{12})$, so it is possible that a degree-2 place of $E$ ramifies in $F$, in which case the rationality of $F$ is not guaranteed.\footnote{Closer group theoretic examination yields 
some evidence for prime divisors of odd degree: namely, the two $3$-cycles of the braid group generator of cycle structure $(7^2.3^2.2^2)$ correspond to degenerate covers with three ramification points,
generating two isomorphic, but {\it non-conjugate} (in $PSL_5(2)$) subgroups. The same holds for the two $2$-cycles of this braid group generator. 
The explicit computations show that the corresponding prime divisors of ramification index 3 and 2 respectively are indeed of degree 1.}
We therefore clarify the rationality of this curve by explicit computation.
\subsection{Deformation of covers}
\label{psl52_deform}
We start with a degenerate cover with ramification structure $(2A,21A,8A)$, with group $PSL_5(2)$. We solve the corresponding system of equations for the three-point cover modulo 11, 
and then lift and retrieve algebraic numbers from the $11$-adic expansions.
The triple is rigid, but as the conjugacy class of the element of order 21 is not rational, we obtain a solution over a quadratic number field, namely
\[0=x^{21}\cdot(x-1)^7\cdot(x-a_1)^3-t\cdot(x^2-2\cdot x+a_2)^8\cdot(x^3-2\cdot x^2+a_3\cdot x+a_4)^4\cdot(x-a_5),\]
where 
$(a_1,...,a_5) := (\frac{1}{8}(-\sqrt{-7} + 11), \frac{1}{16}(-\sqrt{-7} + 11),\frac{1}{16}(\sqrt{-7} + 21), \frac{1}{128}(-3\sqrt{-7} - 31),\frac{1}{8}(-\sqrt{-7} + 3)
)$.
\par
From this degenerate cover, we develop complex approximations for a cover branched in four points, using Puiseux expansions as outlined in Section \ref{deform}.
As pointed out there, in order to turn the above 3-point cover into a first-order approximation of the 4-point family, we need to consider the ``opposite" degeneration as well.
Therefore, write the element of order $21$ as a product of two elements $\sigma_2$ of class $2A$ and $\sigma_3$ of class $3B$. One verifies that in all cases, the triple 
$(\sigma_2,\sigma_3,(\sigma_2\sigma_3)^{-1})$ generates an intransitive group isomorphic to $PSL_3(2)\times C_3$, with orbits of length 21, 7 and 3. 
Equations for the genus zero covers induced by this triple on each orbit are easily computed 
(especially since the degree 21 action is imprimitive, so the corresponding equation arises as a composition of functions of degree 3 and 7) and yield all the information needed for first-order
approximations.
\par
Now let $\mathbb{C}(x)|\mathbb{C}(t)$ be the field extension of rational function fields corresponding to the cover with four branch points. Via Moebius transformations (in $x$ and in $t$) it is 
possible to assume a defining polynomial \[f:=f(t,x):=f_0(x)^3\cdot (x-3)-t\cdot g_0(x)^8\cdot g_1(x)^4\cdot x,\] where $\deg(f_0) = 10$, $\deg(g_0)=2$ and $\deg(g_1)=3$
(so we have e.g.\ assumed the element of order 8 to be the inertia group generator over infinity, and the element of order 3 the one over zero).
Also, assume that for some $\lambda \in \cc$ the polynomials $f_a:=f(a,x)$ and $f_b:=f(b,x)$
(where $a$ and $b$ shall denote the complex zeroes of $x^2+x+\lambda$)
become inseparable in accordance with the elements in the conjugacy class $2A$ .
\par
Once we have obtained a complex approximation of such a polynomial $f$,
we now slowly move the coefficient at $x^2$ of the above polynomial $g_1$ to a fixed rational value, and apply Newton iteration to expand the other coefficients with sufficient 
precision to then retrieve them as algebraic numbers (using the LLL-algorithm). 
One finds that all the remaining coefficients come to lie in a cubic number field. For example, specialization to the rational value $-1$ leads to a root field of 
$x^3 - 14x^2 - 22x - 16$, as can be verified with the values in Theorem \ref{psl52pol}.
This already indicates that there is a rational function field of index 3 
 in the (genus-zero) function field of the Hurwitz space, which would enforce the latter function field to be rational over $\qq$ as well. This will be confirmed by the further computations.
\subsection{Algebraic dependencies and exact equations}
We now choose a prime $p$ such that the above solution, found over a cubic number field, reduces to an $\mathbb{F}_p$-point. Any prime such that the defining polynomial of the cubic
number field has a single root modulo $p$ will do, e.g.\ $p=11$ for our example.
Then we apply approach no.3 described in Section \ref{algdep}, that is, we lift this point to sufficiently many $p$-adic solutions (all coalescing modulo $p$), in order to obtain algebraic 
dependencies between the coefficients\footnote{Alternatively, one could just repeat the process of rational specialization and Newton iteration, as above, sufficiently often, 
obtaining cubic minimal polynomials for the other coefficients in each case, and then interpolate.}.
These dependencies are all of genus zero, and luckily some of them are of very small degree, e.g.\ if $c_2$ and $c_1$ are the coefficients at $x^2$ resp.\ $x$ of the polynomial $g_1$, 
one obtains an equation \[\sum_{i=0}^2\sum_{j=0}^3 \alpha_{ij} c_2^i c_1^j =0\] of degrees 2 and 3 respectively.
As there are a priori $(2+1)\cdot (3+1)=12$ unknown coefficients $\alpha_{ij}$, 
we only need 12 different $p$-adic liftings to find this dependency as the smallest degree dependency between $c_1$ and $c_2$ - and maybe a few more to gain evidence that 
it is not a coincidence. Of course, we find $\alpha_{ij}\in \qq_p$, but for theoretical reasons we expect them to actually be rational numbers - and indeed it is easy to retrieve the 
actual rational numbers from a sufficiently close $p$-adic approximation.
Next, one easily finds a parameter $\alpha$ for the rational function field defined by this equation, using Riemann-Roch spaces (cf.\ Lemma \ref{gon}). 
\par
Now, we can express all coefficients as rational functions in $\alpha$, and obtain the following result:
\begin{satz}
\label{psl52pol}
Let $\alpha, t$ be algebraically independent transcendentals over $\qq$.\\
Define polynomials $f_0,g_0,g_1 \in \qq(\alpha)[x]$ as follows:
%\begin{adjustwidth}{-2cm}{-2cm}
{\footnotesize
\[f_0:= (x^5-2\frac{(\alpha+1)(\alpha+4)}{\alpha-2}x^4-2\frac{(\alpha+1)(\alpha^3 - 15\alpha^2 - 6\alpha - 152)}{(\alpha-2)(\alpha+4)}x^3\]
\[+8(\alpha+1)(\alpha^2-\alpha+7)x^2-7\frac{(\alpha+1)^2 (\alpha^3 + 12/7 \alpha^2 + 3/7 \alpha + 106/7)}{\alpha-2}x +2\frac{(\alpha+1)^5(\alpha+4)}{\alpha-2})\]
\[\cdot (x^5+4\frac{(\alpha-5)(\alpha^2 + 5/4 \alpha + 19/4)}{(\alpha+1)^2}x^4-2\frac{\alpha^3 + 42 \alpha^2 + 45 \alpha + 220}{\alpha+4}x^3\]
\[-12\frac{(\alpha+1)(\alpha^4 - 5/2 \alpha^3 - 27/2 \alpha^2 - 29 \alpha - 100)}{(\alpha-2)(\alpha+4)}x^2+9\frac{(\alpha+1)^2 (\alpha^3 + 8/3 \alpha^2 + 19/3 \alpha + 50/3)}{\alpha-2}x-3(\alpha+1)^4),\]
\[g_0:= x^2-6x-(\alpha+1)^2,\]
\[g_1:= (x-\frac{(\alpha+1)(\alpha+4)}{\alpha-2}) \cdot (x^2+2\frac{(\alpha-2)(\alpha+1)}{\alpha+4}x -(\alpha+1)^2).\]}
%\end{adjustwidth} 
Then the polynomial $f(\alpha,t,x):= f_0^3 \cdot (x-3) - t \cdot g_0^8 g_1^4 \cdot x$, of degree 31 in $x$, has Galois group $PSL_5(2)$ over $\qq(\alpha,t)$, with ramification structure
$(2^8.1^{15}, 2^8.1^{15}, 3^{10}.1, 8^2.4^3.2.1)$ with respect to $t$.
\end{satz}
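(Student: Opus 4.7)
The plan is to first verify the ramification structure of $f$ by direct inspection of the polynomial and its $x$-discriminant, and then pin down the Galois group $G$ of $f$ over $\qq(\alpha, t)$ via the classification of transitive permutation groups of prime degree $31$ combined with a specialization argument. Before any of this I would check that $f$ is irreducible in $\qq(\alpha)(t)[x]$, which amounts to showing that a single specialization of $t$ produces an irreducible degree-$31$ polynomial over $\qq(\alpha)$; this in particular certifies that $G$ is transitive of degree $31$.

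The ramification at $t = 0$ and at $t = \infty$ can be read off directly from the shape $f = f_{0}^{3}(x-3) - t \cdot g_{0}^{8} g_{1}^{4} \cdot x$. At $t=0$ the polynomial degenerates to $f_{0}^{3}(x-3)$; provided $f_{0}\in\qq(\alpha)[x]$ is squarefree of degree $10$ and coprime to $x-3$ (a routine check on the explicit coefficients), this yields cycle type $3^{10}\cdot 1$. Passing to $s:=1/t$ at $s=0$, the dominant term $g_{0}^{8} g_{1}^{4} x$ with $\deg g_{0}=2,\ \deg g_{1}=3$ gives $8^{2}\cdot 4^{3}\cdot 2\cdot 1$, again subject to direct separability and coprimality checks on $g_{0}, g_{1}$. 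The remaining two branch points are located by computing $D(t):=\operatorname{disc}_{x}(f)\in\qq(\alpha)[t]$; after dividing out the contribution of $t=0$ and $t=\infty$, what remains should be a quadratic factor $q(t)\in\qq(\alpha)[t]$, and the cycle type $2^{8}\cdot 1^{15}$ at each of its roots is confirmed by working inside $\qq(\alpha)[t]/(q(t))$ and verifying that $\gcd(f,\partial f/\partial x)$ has degree $8$ there, i.e.\ $f \bmod q(t)$ has exactly $8$ double roots and $15$ simple roots.

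Once the ramification is settled, all four cycle types are even permutations, hence $G\le A_{31}$. By Burnside's theorem on transitive groups of prime degree, a transitive subgroup of $S_{31}$ is either contained in $AGL_{1}(\ff_{31})$ (order $930$) or is $2$-transitive; the inertia element of order $8$ at $t=\infty$ rules out the solvable case, and the CFSG-based classification of $2$-transitive groups of degree $31$ leaves only $A_{31}$, $S_{31}$, $PSL_{5}(2)$ (on hyperplanes of $\ff_{2}^{5}$), and $PSL_{3}(5)\le H\le P\Gamma L_{3}(5)$ (on projective points of $\ff_{5}^{3}$). To eliminate the candidates other than $PSL_{5}(2)$ I would specialize $\alpha$ to a convenient rational value $\alpha_{0}$ and directly compute the Galois group of $f(\alpha_{0},t,x)\in\qq(t)[x]$ — for instance by reading off Frobenius cycle shapes from factorizations of $f(\alpha_{0},t_{0},x) \bmod \ell$ for many rational specializations $t_{0}\in\qq$ and many primes $\ell$, and matching them against the character tables of $A_{31}$ and $PSL_{3}(5)$. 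The specialization theorem then forces $G\supseteq\operatorname{Gal}(f(\alpha_{0},t,x)/\qq(t))$, and combined with the list above this pins down $G=PSL_{5}(2)$.

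The main obstacle I foresee is the clean verification of cycle type $2^{8}\cdot 1^{15}$ at the two ``hidden'' branch points cut out by $q(t)$: unlike $t=0$ and $t=\infty$ the factorization pattern is not visible on the face of $f$, and one must work over the quadratic extension $\qq(\alpha)[t]/(q(t))$ with explicit multiplicity analysis. Everything else reduces either to a routine symbolic computation over $\qq(\alpha)$ or to a bounded finite check using the classification theorem once a suitable specialization $\alpha_{0}$ has been identified.
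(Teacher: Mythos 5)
Your reduction to a short list of candidate groups is sound and close to the paper's own first step (the paper invokes Dedekind reduction together with the list of primitive groups of degree $31$ to conclude that the Galois group lies in $\{PSL_5(2),A_{31},S_{31}\}$), and your verification of the ramification data is the kind of routine symbolic work the paper takes for granted. The genuine gap is in your final step. Reading off Frobenius cycle shapes from factorizations of $f(\alpha_0,t_0,x)\bmod\ell$ and ``matching them against the character tables of $A_{31}$'' can only ever prove that certain cycle types \emph{do} occur in the Galois group, i.e.\ it gives lower bounds; the absence of $A_{31}$-specific cycle types among finitely many reductions is Chebotarev-style evidence, not a proof, so this method cannot exclude $A_{31}$ (nor $S_{31}$, unless your parity observation is upgraded to a check that $\operatorname{disc}_x f$ is literally a square in $\qq(\alpha,t)$ and not merely a square times a nonsquare constant). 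This is precisely the pitfall the paper flags in its discussion of Galois group verification: numerical or statistical monodromy evidence is not accepted as proof there.

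The paper's actual exclusion argument, which your proposal is missing, is an exact algebraic one exploiting the two inequivalent $31$-point actions of $PSL_5(2)$ (on points and on hyperplanes of $\ff_2^5$), which share the same permutation character and hence occur within the same Hurwitz family. One locates two specializations ($\alpha=0$ and $\alpha=1/2$) whose covers have the same branch locus, substitutes the rational function $t=t(s)$ coming from the first cover into the polynomial of the second, and verifies by exact factorization over $\qq(s)$ that the result splits into factors of degrees $15$ and $16$. This exhibits an index-$31$ subgroup of the Galois group with orbits of lengths $15$ and $16$ on the roots; $PSL_5(2)$ has such a subgroup (a hyperplane stabilizer), whereas every index-$31$ subgroup of $A_{31}$ or $S_{31}$ is a point stabilizer with orbits $1+30$. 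Irreducibility of the Hurwitz space then propagates the conclusion from this one unramified specialization to the whole two-parameter family. Some exact argument of this kind (or a fully certified monodromy computation) is needed to close your proof.
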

\begin{proof}
Dedekind reduction, together with the list of primitive groups of degree 31 (as implemented e.g.\ in Magma), shows that $PSL_5(2)$ must be a subgroup of the Galois group. 
It therefore suffices to exclude the possibilities $A_{31}$ and $S_{31}$.\par
Multiplying $t$ appropriately, we can assume the covers to be ramified in $t=0, t=\infty$ and the zeroes of $t^2+t+\lambda$, with some parameter $\lambda$. Interpolating through sufficiently many values of $\alpha$ one 
finds the degree-24 rational function $\lambda=\frac{h_1(\alpha)}{h_2(\alpha)}$ parameterizing the Hurwitz curve. As e.g.\ $\alpha=0$ and $\alpha=1/2$ yield the same value for $\lambda$, we set 
$t=C \cdot (\frac{f_0^3 \cdot (x-3)}{g_0^8\cdot g_1^4\cdot x})(0,s)$ (evaluating $x$ to a parameter $s$ of a rational function field, as well as $\alpha$ to $0$, and multiplying with a suitable constant $C$ to obtain 
the above condition on the branch points). Then one can check that over $\qq(s)$, the polynomial $f(1/2, C_2\cdot t,x)$ (again for a suitable constant $C_2$ to obtain the branch point conditions) splits into two factors 
of degrees 15 and 16. This means that for this particular point of the family, there is an index-31 subgroup of the Galois group that acts intransitively on the roots. As $PSL_5(2)$ has such a subgroup and 
$A_{31}$ and $S_{31}$ don't, the Galois group for this particular specialization is $PSL_5(2)$. This specialization corresponds to an unramified point on the (irreducible) Hurwitz space, therefore the entire family must 
belong to the same Hurwitz space and therefore have Galois group $PSL_5(2)$ over $\qq(\alpha,t)$.
\end{proof}
We can now specialize $\alpha$ to any value that does not let two or more ramification points coalesce, to obtain polynomials with nice coefficients with group $PSL_5(2)$ over $\qq(t)$.
E.g.\ $\alpha \mapsto 0$ leads to:
\begin{kor}
 The polynomial
\[\tilde{f}(t,x):=(x^5 - 95 x^4 - 110 x^3 - 150 x^2 - 75 x - 3)^3 (x^5 + 4 x^4 - 38 x^3 + 56 x^2 + 53 x - 4)^3(x-3)\]
\[ - t (x^2 - 6 x - 1)^8(x^2-x-1)^4 (x+2)^4 x \in \qq(t)[x]\]
defines a regular extension of $\qq(t)$ with Galois group $PSL_5(2)$.
\end{kor}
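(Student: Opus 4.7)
The plan is to obtain this as a direct specialization of Theorem \ref{psl52pol}. First I would verify by elementary algebraic manipulation that substituting $\alpha = 0$ into the rational functions defining $f_0$, $g_0$, $g_1$ produces, up to overall normalization of $t$ and of $x$, exactly the three polynomial factors appearing in $\tilde{f}(t,x)$. In particular, at $\alpha = 0$ the linear factor $x - (\alpha+1)(\alpha+4)/(\alpha-2)$ inside $g_1$ becomes $x+2$, the quadratic factor becomes $x^2 - x - 1$, $g_0$ becomes $x^2 - 6x - 1$, and the two quintic factors of $f_0$ reduce to those in the statement; the constant $C$ absorbed into $t$ accounts for any scalar discrepancy.

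Second, I would check that $\alpha = 0$ is a non-degenerate value of the parameter, meaning that the four branch points in the $t$-line do not collide. By the construction in Theorem \ref{psl52pol}, the $\lambda$-parameter giving the fourth branch point is a rational function of $\alpha$; so it suffices to evaluate this and confirm $\lambda \notin \{0, \infty\}$ and $1 + 4\lambda \ne 0$ at $\alpha = 0$. Equivalently one can compute the $x$-discriminant of $\tilde{f}(t,x)$ as a polynomial in $t$ and observe that it has four distinct roots in $\mathbb{P}^1$. This shows that $\alpha = 0$ corresponds to an interior $\mathbb{Q}$-rational point of the reduced Hurwitz space, not to a boundary point with degenerate monodromy.

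Third, I would invoke the specialization principle: since the family $f(\alpha, t, x)$ defines, over $\mathbb{Q}(\alpha,t)$, a regular Galois extension with group $PSL_5(2)$ (by Theorem \ref{psl52pol}), and since $\alpha \mapsto 0$ is a specialization to a non-boundary $\mathbb{Q}$-point of the Hurwitz space, the specialized extension $\tilde{f}(t,x) = 0$ remains a regular Galois extension of $\mathbb{Q}(t)$ with the same group. Concretely: the Galois group of $\tilde f$ over $\mathbb{Q}(t)$ is a subgroup of $PSL_5(2)$ of the same order, because the generic monodromy is preserved under non-degenerate specialization; and it is all of $PSL_5(2)$ rather than a proper subgroup because the four inertia generators (of cycle types $2^8.1^{15}$, $2^8.1^{15}$, $3^{10}.1$, $8^2.4^3.2.1$) persist in the specialization and already generate $PSL_5(2)$.

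If one prefers a self-contained verification avoiding the appeal to the family, the argument from the proof of Theorem \ref{psl52pol} can be repeated verbatim on $\tilde{f}$: Dedekind reduction modulo a small prime (e.g.\ $p = 11$ or a prime not dividing the discriminant and the leading coefficients) combined with the list of primitive degree-$31$ groups forces $PSL_5(2) \leq \mathrm{Gal}(\tilde{f}/\mathbb{Q}(t))$; then specializing $t$ to a value for which $\tilde{f}$ factors over $\mathbb{Q}$ as a product of a degree-$15$ and a degree-$16$ polynomial exhibits an intransitive subgroup of index $31$, ruling out $A_{31}$ and $S_{31}$. The only step that is not fully routine is the last one, namely producing the explicit rational $t$-value at which $\tilde{f}$ splits in the prescribed way; but this is exactly the kind of specialization constructed in the proof of Theorem \ref{psl52pol}, and it transfers immediately to the case $\alpha = 0$.
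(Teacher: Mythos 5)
Your argument is correct and is exactly the paper's justification: the corollary is stated there as an immediate non-degenerate specialization of Theorem \ref{psl52pol}, with the remark that the only rational values of $\alpha$ causing branch points to coalesce are $\alpha\in\{-4,-1,2\}$, so $\alpha=0$ is admissible. (One minor slip: the two conjugate branch points, the zeroes of $t^2+t+\lambda$, coincide when $1-4\lambda=0$, not $1+4\lambda=0$; your alternative check via the $x$-discriminant of $\tilde f$ as a polynomial in $t$ is the safer formulation.)
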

In fact it can be seen from $\lambda=\frac{h_1(\alpha)}{h_2(\alpha)}$ (as in the proof above) that the only specialized rational values for $\alpha$ that lead to degenerate covers 
(with two branch points coalescing) are $\alpha \mapsto -4$, $\alpha \mapsto -1$ and $\alpha \mapsto 2$.\\ \\
{\bf Remark:}\\
The above proof essentially uses the fact that $PSL_5(2)$ has two %arithmetically equivalent
non-conjugate actions on 31 points inducing the same permutation character. This can of course be applied to other linear groups, and has e.g.\ been used in \cite{Ma1} to verify $PSL_2(11)$ 
(and others) as the Galois group of a family of polynomials. Cf.\ also the Galois group verifications in the following sections.
%\end{remark}

\section{Polynomials with Galois group $PSL_3(4) \le G\le P\Gamma L_3(4)$ over $\mathbb{Q}(t)$}
\label{psl34}
\subsection{Review of known results}
Previously, there have not been any explicit polynomials $f(t,X)\in \qq(t)[X]$ with regular Galois group $P\Gamma L_3(4) (=PSL_3(4).S_3), PGL_3(4) (=PSL_3(4).3)$ or $PSL_3(4)$.
Malle gave a polynomial for $PSL_3(4).2$ (the extension of $PSL_3(4)$ by the field automorphism) in \cite[Theorem 3]{Ma2}, but this does not yield a $PSL_3(4)$-polynomial, 
as the $PSL_3(4)$-fixed field does not have genus 0 
 (see however \cite[p.2]{Zyw} for a way to obtain from Malle's polynomial a $PSL_3(4)$-polynomial over $\qq$ (not $\qq(t)$).\par
Theoretical arguments for all $PSL_3(4)\le G\le P\Gamma L_3(4)$ to be a regular Galois group over $\mathbb{Q}(t)$ have however been known for a long time 
(cf.\ \cite{MM}, Example 4.2. in Chapter IV.4).
\subsection{Data of a Hurwitz family}
We find polynomials for all groups $PSL_3(4) \le G\le P\Gamma L_3(4)$ by computing the Hurwitz space of a family of covers with Galois group $P\Gamma L_3(4)$, ramified over four places with ramification structure 
$(2^7.1^7, 2^7.1^7, 3^5.1^6, 5^4.1)$ with regard to the natural degree 21 permutation representation of $P\Gamma L_3(4)$. 
The length of the corresponding Nielsen class is 20, and the $C_2$-symmetrized inner Hurwitz curve
 is a rational curve of genus zero. Therefore this family leads to many polynomials with regular Galois group $P\Gamma L_3(4)$ over $\mathbb{Q}$. The fixed field of $PSL_3(4)$ in such an extension is still of genus zero, 
 as can be seen by the coset action of the above class four-tuple on $P\Gamma L_3(4)/PSL_3(4)$. However, even the fixed field of $PGL_3(4)$ cannot be guaranteed to be a rational function field by theoretical means (it is 
 a genus-zero degree-2 extension of the function field $\mathbb{Q}(t)$, ramified in two places, which are possibly algebraically conjugate, in which case the extension field need not be rational).
The above fixed field would automatically be rational for any rational point on the {\it unsymmetrized} Hurwitz curve - i.e. for a regular $P\Gamma L_3(4)$-extension with all branch points rational -
but this curve is not of genus zero anymore.\par
We therefore verify by explicit computation that the fixed field of $PSL_3(4)$ is indeed a rational function field for suitable choices of parameters; this yields explicit polynomials with
regular Galois groups $PGL_3(4)$ and $PSL_3(4)$ as well.
\subsection{A family of polynomials with regular Galois group $P\Gamma L_3(4)$}
The deformation and algebraization process for our family is analogous to the one in Section \ref{psl52} (note that the Hurwitz curves are rational in both cases). It should therefore suffice to present the 
resulting polynomial. We only note briefly that a good permutation triple to start the deformation process from is the triple with cycle structures $(2^7.1^7, 8^2.4.1 ,5^4.1)$, generating
the transitive subgroup $PSL_3(4).2$. A polynomial for this triple is easily found modulo a small prime and then lifted to a polynomial defined over a number field - in this case 
$\mathbb{Q}(\sqrt{-1})$.
\begin{satz}
The polynomial 
\begin{adjustwidth}{-2cm}{-2cm}
{\footnotesize
\[f:=(x^3+(\alpha-10)x^2-(\alpha^2+20)x+5\alpha)^5 (x+1)^5 x \]
\[- t((\alpha^2-6\alpha+45) x^5+\frac{1}{8}(\alpha^4-6\alpha^3+85\alpha^2-132\alpha+1476)  x^4+\frac{1}{2}(\alpha^4-4\alpha^3+53\alpha^2-138\alpha+360) x^3\]
\[+\frac{1}{4}\alpha (\alpha^3-28\alpha^2+77\alpha-450) x^2+\frac{1}{2}\alpha^2 (\alpha^2-2\alpha+65)  x+\frac{1}{8}\alpha^2(\alpha-5)^2)^3\cdot\]
\[ (\alpha (\alpha+3)  x^5+(4\alpha^3-15\alpha^2+47\alpha+192)  x^4+2 (2\alpha^4-20\alpha^3+127\alpha^2-329\alpha+880)  x^3\]
\[+2  (2\alpha^4-36\alpha^3+347\alpha^2-1485\alpha+3000)  x^2+(-44\alpha^3+405\alpha^2-3325\alpha+9000)  x+125 (\alpha^2-5\alpha+40))\in \mathbb{Q}(\alpha,t)[x]\]}
\end{adjustwidth}
has regular Galois group $P\Gamma L_3(4)$ over $\mathbb{Q}(\alpha,t)$, with ramification structure $(2^7.1^7, 2^7.1^7, 3^5.1^6, 5^4.1)$ with regard to $t$.
\end{satz}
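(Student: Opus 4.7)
The plan is to imitate the strategy used to prove Theorem \ref{psl52pol}. First, I would verify the claimed ramification structure directly from the given factored form of $f$: the factor $f_0^5(x+1)^5 x$ yields four zeros of multiplicity $5$ and one simple zero at $t\mapsto 0$, i.e.\ cycle type $(5^4.1)$, while rewriting $t = f_0^5(x+1)^5 x/(g_3^3 g_5)$ shows that at $t\mapsto \infty$ the inertia comes from $g_3^3 g_5$ together with one simple fixed point at infinity, yielding cycle type $(3^5.1^6)$. The reduced discriminant of $f$ in $x$ should be, up to a squarefull factor, a quadratic polynomial in $t$ whose two roots are the remaining branch points, at each of which $f$ acquires exactly seven double roots, giving cycle type $(2^7.1^7)$. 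All of this is a routine discriminant and resultant computation.

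For the Galois group $G$ over $\mathbb{Q}(\alpha,t)$, Dedekind reduction after a suitable rational specialization of $\alpha$ produces Frobenius elements of each of the four cycle types; hence $G$ is a transitive subgroup of $S_{21}$ containing an involution of cycle type $(2^7.1^7)$. Inspecting the list of primitive permutation groups of degree $21$ (as available in Magma's database) and ruling out the actions of $A_7$, $S_7$ on $2$-subsets (whose involutions have cycle structures $(2^5.1^{11})$, $(2^8.1^5)$ or $(2^9.1^3)$ only), one is left with $A_{21}$, $S_{21}$ and the almost simple groups $H$ with $PSL_3(4) \le H \le P\Gamma L_3(4)$ acting on the $21$ points of $PG(2,4)$.

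To eliminate $A_{21}$ and $S_{21}$, I would use, as in the remark following Theorem \ref{psl52pol}, that $P\Gamma L_3(4)$ admits a second, inequivalent degree-$21$ permutation representation on the lines of $PG(2,4)$, with the same character as the point action and interchanged with it by duality. The line stabilizer has point-orbits of sizes $5$ and $16$, a pattern incompatible with any index-$21$ subgroup of $A_{21}$ or $S_{21}$. Concretely, for an appropriately chosen rational value $\alpha_0$ and with $t$ set equal to $(f_0^5(x+1)^5 x)/(g_3^3 g_5)$ evaluated at $(\alpha_0,s)$ for a parameter $s$, the specialized polynomial should factor over $\mathbb{Q}(s)$ into two irreducible factors of degrees $5$ and $16$, forcing $G \le P\Gamma L_3(4)$. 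Since the cycle type $(2^7.1^7)$ corresponds precisely to the Frobenius-type outer involutions of $P\Gamma L_3(4)$ (the fixed set of the field automorphism on $PG(2,4)$ is the Fano subplane $PG(2,2)$ of $7$ points), the two ramification involutions together with the $3$-element of cycle type $(3^5.1^6)$ generate the full outer $S_3$-quotient of $P\Gamma L_3(4)/PSL_3(4)$, pinning $G$ down to $P\Gamma L_3(4)$ itself.

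The main obstacle is identifying the specialization that realizes the degree-$5$/degree-$16$ factorization and verifying that it corresponds to an interior, unramified point of the Hurwitz space, so that irreducibility of the space transports the conclusion from this single specialization to the whole family; a secondary, purely class-theoretic point is to confirm in $P\Gamma L_3(4)$ that the four prescribed cycle types actually occur in the Nielsen class in question and that no proper intermediate subgroup between $PSL_3(4)$ and $P\Gamma L_3(4)$ carries a compatible $4$-tuple, which is a routine Magma check.
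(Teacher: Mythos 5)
Your overall strategy coincides with the paper's: reduce via Dedekind reduction and the list of primitive degree-$21$ groups to the candidates $A_{21}$, $S_{21}$ and the groups between $PSL_3(4)$ and $P\Gamma L_3(4)$; exploit the two inequivalent index-$21$ actions of $P\Gamma L_3(4)$ (points and lines of $PG(2,4)$, the line stabilizer having point-orbits of lengths $5$ and $16$) to eliminate $A_{21}$ and $S_{21}$; and use connectedness of the Hurwitz space to propagate the conclusion from one good specialization to the whole two-parameter family. Your observation that the classes $2^7.1^7$ and $3^5.1^6$ lie outside $PSL_3(4)$ and force the full $S_3$-quotient is also sound.

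The gap sits in the one step you yourself single out as the crux, and your concrete recipe for it fails. As written, you substitute $t=\varphi_{\alpha_0}(s)$, with $\varphi_{\alpha_0}$ the degree-$21$ rational function defined by the cover at the \emph{same} value $\alpha_0$, into the \emph{same} specialized polynomial. Adjoining a root of a cover to its own base field realizes the point stabilizer acting on the points; since $P\Gamma L_3(4)$ is $2$-transitive of degree $21$ (as are $A_{21}$ and $S_{21}$), this always produces the factorization $1+20$ and distinguishes nothing. To see the $5+16$ pattern one must realize the \emph{line} action over the field generated by a root in the \emph{point} action, and these two actions are carried by two genuinely different covers in the family with the same branch locus. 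The paper's device is to compute the map $\alpha\mapsto\mu(\alpha)$ from the Hurwitz curve to the symmetrized branch-locus line, find two rational values with the same image --- here $\alpha=10$ and $\alpha=13$ --- and verify that the mixed polynomial $p_{10}(x)q_{13}(y)-q_{10}(x)p_{13}(y)$ decomposes in $\qq[x,y]$ into factors of degrees $5$ and $16$; this exhibits an index-$21$ subgroup of $\mathrm{Gal}(f_{10}\mid\qq(t))$ with orbits $5$ and $16$, which is impossible for $A_{21}$ and $S_{21}$ since their only index-$21$ subgroups are point stabilizers. Without such a pair of parameter values with coinciding branch loci your factorization test is vacuous; with it, the remainder of your argument goes through exactly as in the paper.
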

\begin{proof}
Specializing in appropriate finite fields, one sees that the Galois group of $f$ is either $P\Gamma L_3(4)$ or $S_{21}$.
Now $P\Gamma L_3(4)$ has two non-conjugate subgroups $U$ and $V$ of index 21. If one considers the action of $P\Gamma L_3(4)$ on the right cosets of $U$, then $V$ is intransitive with orbits 
of length 5 and 16. The images of the desired inertia subgroup generators $\sigma_1,...,\sigma_4$ in the action on the cosets of $V$ are still of the same cycle type, 
and therefore belong to the same family of covers, but not to the same cover. 
\par
A suitable linear transformation in $t$ assures that the function field extension $\qq(\alpha)(x)\mid \qq(\alpha)(t)$ is ramified over $t\mapsto 0$, $t\mapsto \infty$ and $t\mapsto \{$zeroes of $t^2+t+\mu(\alpha)\}$ for some
rational function $\mu(\alpha)\in \qq(\alpha)$. This choice of ramification yields a good model for the $C_2$-symmetrized Hurwitz curve. One then notes that the specializations $\alpha\mapsto 10$ and $\alpha\mapsto 13$ lead to the
same ramification locus. If $f_{10}(t,x)=p_{10}(x)-tq_{10}(x)$ and $f_{13}(t,x)=p_{13}(x)-tq_{13}(x)$ are the corresponding polynomials, the polynomial $p_{10}(x)\cdot q_{13}(y)-q_{10}(x)\cdot p_{13}(y)$
decomposes in $\qq[x,y]$ into factors of degree 5 and 16. This means that there is an index-21 subgroup in the Galois group of $f_{10}(t,x)$ acting intransitively with orbits of length 5 and 16.
Therefore $f_{10}$ must have Galois group $P\Gamma L_3(4)$, and as our Hurwitz space is connected, the same must hold for the two-parameter polynomial $f$.
\end{proof}
\subsection{Descent to proper normal subgroups of $P\Gamma L_3(4)$}
As noted above, the fixed field of $PGL_3(4)$ in the Galois closure of $f$ is of genus zero. It is given as $\qq(\alpha)(X,Y)$, 
where $p_\alpha(X,Y):=X^2+3(Y^2-(\alpha^2-15\alpha+90)(\alpha^2-5\alpha+40))=0$.
Although the conic given by $p_\alpha(X,Y)=0$ does not split generically - i.e.\ it does not have any $\mathbb{Q}(\alpha)$-rational points, 
there are many values $\alpha_0\in \mathbb{Q}$ for which the specialized curve given by $p_{\alpha_0}(X,Y)=0$ has non-singular points, which means that
the residue field $\qq(X,Y)[\alpha]/(\alpha-\alpha_0)$ is a rational function field for these values $\alpha\mapsto \alpha_0$, i.e.\ it can be parametrized as $\qq(s)$. 
One such example is $\alpha_0=10$.
In this case, parametrizing $t$ as a rational function in $s$ yields the following polynomial, with regular
Galois group $PGL_3(4)$ over $\qq$:
$$g:=(s^2+3)(x^3-120x+50)^5(x+1)^5 x - \frac{4}{3\cdot5^5\cdot13^2\cdot17^4}(111151s^2+389344s-55891)\cdot$$
$$(85x^5 + 1582x^4 + 5140x^3 - 3700x^2 + 7250x + 625/2)^3(130x^5 + 3162x^4 + 20580x^3 + 13700x^2 - 27750x + 11250)$$ $$ \in \qq(s)[x]$$
As the fixed field of $PSL_3(4)$ is a degree 3 genus zero extension of the fixed field of $PGL_3(4)$, it is a rational function field whenever the latter field is. 
Parameterizing it for our specialization $\alpha_0=10$ leads to the following polynomial with regular Galois group $PSL_3(4)$:
$$h:=(y^2-y+1)^3(x^3-120x+50)^5(x+1)^5 x$$
$$-(\frac{4}{751689}y^6 - \frac{4}{250563}y^5 - \frac{2783192}{132328584375}y^4 + \frac{27261652}{396985753125}y^3 - \frac{2783192}{132328584375}y^2 - \frac{4}{250563}y + \frac{4}{751689})\cdot$$
$$(85x^5 + 1582x^4 + 5140x^3 - 3700x^2 + 7250x + 625/2)^3(130x^5 + 3162x^4 + 20580x^3 + 13700x^2 - 27750x + 11250)$$ $$\in \qq(y)[x]$$
\subsection{Totally real extensions with group $PSL_3(4)\le G \le P\Gamma L_3(4)$}
\label{tr_psl34}
The family computed above does not yield any totally real Galois extensions with the above groups, as can be checked easily by observing the action of complex conjugation on the class tuples in
the Nielsen class. This conjugation is never given by the identity element of $P\Gamma L_3(4)$, which would however be necessary to obtain a totally real specialization.\par
On the other hand, the family used in \cite{MM}, Example 4.2 in Chapter IV.4 to obtain $PSL_3(4)\le G\le P\Gamma L_3(4)$ as regular Galois groups by theoretical means does lead to such specializations.
I computed this family in an earlier version of this paper (cf.\ \cite[Chapter 7]{Koe}); it also has a rational Hurwitz curve, but the corresponding polynomials turned out to have rather large coefficients, 
therefore I will only give a single polynomial for $P\Gamma L_3(4)$. Polynomials for proper normal subgroups can be obtained from this in the usual way.
\begin{satz}
The polynomial 
%{\footnotesize
\[f(t,x):=(x^7 + \frac{18453672844570518827351}{464949935671}  x^6 - \frac{207994860612980146110393025396186540191}{2812713705941843158}  x^5 \]
\[+ \frac{28099474349691216216874520999969033907118201199}{1406356852970921579}  x^4 \]
\[+ \frac{21503029546831034221405520441479341846831570716278114576}{1406356852970921579}  x^3\]
\[- \frac{9875613161329199448867490608939590635407743468957241801995410272}{1406356852970921579}  x^2 \]
\[  + \frac{2934026230199894418359951279176917481405147836113333573421902044849280}{4672281903557879}  x\]
\[+ \frac{3220744414074178541841609287239948730104227472303051912345719011603335979776}{4672281903557879})^2 \cdot \]
\[(x^7 + \frac{6629673981088984}{10049}  x^6 - \frac{4334793194194588640311258112563598440086555375}{576034733687471381342032}  x^5 \]
\[+ \frac{19752423757662662431040186068639932484605957602986058305001}{4703215594045637427773689649}  x^4 \]
\[- \frac{14339959909531370924438660628225217373062780277511175009294545777430834}{47262613504564610511697807282801}  x^3 \]
\[- \frac{4636043913327361505565912998538088120702737210037510505278818632207867738346240}{47262613504564610511697807282801}  x^2\]
\[ + \frac{40249862706254613322713917502727163663607545615294770224178252169533070179559152324432}{47262613504564610511697807282801}  x \]
\[+ \frac{43051490625182263519732001313495514865873179734602004170000304283139316449024}{4634750377158001})\]
\[-t  (x + \frac{291343529284}{10049})^6  (x - 2349544591)^6  (x - 346425124)^3  (x - 304781764)^3  x\]
%}
has regular Galois group $P\Gamma L_3(4)$ over $\qq(t)$. The ramification structure with regard to $t$ is of type $(2^7.1^7, 2^8.1^5, 3^5.1^6, 6^2.3^2.2.1)$.
Furthermore, let $a$ be the unique ramification point of $f$ inside the real interval $(-\infty, 0)$, i.e.\ $a \approx -8.75 \cdot 10^{22}$.
Then for $t_0\in (a,0)$, the specialized polynomial $f(t_0,x)$ is totally real.
\end{satz}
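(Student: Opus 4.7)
The plan is to break the theorem into three parts: (i) verifying the ramification structure with respect to $t$, (ii) identifying the Galois group as exactly $P\Gamma L_3(4)$, and (iii) establishing the totally real statement.

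For the ramification structure, I would read the cycle types at two of the four branch points directly from the explicit form of $f$. Setting $t \mapsto 0$ gives $f(0,x) = P_1(x)^2 \cdot P_2(x)$ with $\deg P_1 = \deg P_2 = 7$, forcing the cycle type $(2^7.1^7)$. At $t\mapsto \infty$, a degree count shows two roots escape to infinity (giving a $2$-cycle), while the roots of the $t$-coefficient contribute cycles of length $6,6,3,3,1$, yielding $(6^2.3^2.2.1)$. The remaining two branch points are the other roots of $\mathrm{disc}_x(f) \in \qq[t]$ (after removing the factors already accounted for at $0$ and $\infty$); at each such $t_i$, factoring $f(t_i,x)$ exhibits the prescribed cycle types $(2^8.1^5)$ and $(3^5.1^6)$ (after which the Riemann--Hurwitz genus formula gives $g = 0$, consistent with the family).

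For the Galois group, I would first apply Dedekind reduction modulo a prime of good reduction to exhibit enough elements of the listed cycle types to force the Galois group to contain $P\Gamma L_3(4)$ (using that $P\Gamma L_3(4)$ is essentially the only primitive subgroup of $S_{21}$ with ramification structure of this type and that the resulting group must be primitive by the cycle structures above). The main task is then to exclude $A_{21}$ and $S_{21}$, which I would handle exactly as in the previous section: the polynomial $f$ is a specialization of the $2$-parameter family computed in \cite{Koe}, so one can locate two parameter specializations of that family which share the same branch point locus as $f$, write them as $f_i(t,x)=p_i(x)-t q_i(x)$ ($i=0,1$), and check that the polynomial $p_0(x)q_1(y)-q_0(x)p_1(y) \in \qq[x,y]$ decomposes into factors of degrees $5$ and $16$. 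Since $P\Gamma L_3(4)$ has two non-conjugate subgroups $U,V$ of index $21$ such that $V$ acts on $G/U$ with orbits of lengths $5$ and $16$, while neither $A_{21}$ nor $S_{21}$ has such an intransitive index-$21$ subgroup, this pins down the Galois group as $P\Gamma L_3(4)$. The Hurwitz space of the family is irreducible, so the conclusion propagates to $f$.

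For the totally real statement, I would use that the number of real roots of $f(t_0,x)$ is constant as $t_0$ ranges over any open subinterval of $\rr$ not containing a branch point. Factor $\mathrm{disc}_x(f)\in \qq[t]$ to locate the finitely many real branch points of $f$, verify that all of them are real, and identify $a$ as the largest real branch point strictly less than $0$. Next, pick one explicit $t_0 \in (a,0)$ (for instance a small negative rational) and verify rigorously, using a Sturm sequence for $f(t_0,x)\in \qq[x]$, that this specialization has $21$ distinct real roots. Continuity of the roots in $t_0$, together with the absence of further branch points on $(a,0)$, then shows that $f(t_0,x)$ is totally real for every $t_0 \in (a,0)$.

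The most delicate step is the second one: showing the Galois group is exactly $P\Gamma L_3(4)$ rather than $A_{21}$ or $S_{21}$. Dedekind reduction alone does not rule out $A_{21}$/$S_{21}$, and the single-parameter polynomial above does not by itself supply a specialization argument; the proof genuinely leans on the background computation of the full $\alpha$-family in \cite{Koe}, from which two specializations with coinciding branch locus can be exhibited to activate the orbit-length $(5,16)$ argument. The totally real conclusion, by contrast, reduces once the ramification locus is known to a finite Sturm-sequence check.
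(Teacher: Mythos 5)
Your proposal is correct and matches the paper's intended argument: the paper actually states this theorem \emph{without} an explicit proof, deferring to the analogous verification in the preceding subsection (Dedekind reduction to narrow the candidates to $P\Gamma L_3(4)$ and $S_{21}$, then exclusion of the symmetric group via two members of the family from \cite{Koe} sharing a branch locus and the resulting degree-$(5,16)$ factorization coming from the two non-conjugate index-$21$ subgroups), which is precisely what you reconstruct. Your reading of the cycle types at $t\mapsto 0$ and $t\mapsto\infty$ and the Sturm-sequence/continuity argument for the totally real interval are likewise the standard checks the paper takes for granted, and your closing remark correctly identifies that the single displayed polynomial alone does not suffice for the group verification without the background two-parameter family.
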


\section{Totally real extensions with group $PGL_2(11)$}
\label{totally_real1}
In this section and the following ones, we will focus on totally real extensions. In particular, we compute explicit polynomials for totally real Galois extensions over $\qq$, with Galois 
groups $PGL_2(11)=PSL_2(11).2$, $PSL_3(3)$, $M_{22}$ and $Aut(M_{22})=M_{22}.2$.
The first two of these groups are the smallest (with respect to minimal faithful permutation degree) that have not been previously realized as the Galois group of a totally real extension 
of $\qq$, this means that explicit totally real polynomials are now known for every transitive permutation group of degree at most 13 (cf.\ \cite{KM2}).
By now, some totally real specializations of the $PGL_2(11)$- and $PSL_3(3)$-polynomials computed below have been inserted in the database \cite{KM2}.\par
Note that totally real extensions can only be obtained via families with four or more branch points, cf.\ \cite{MM}, Chapter I, Example 10.2.
The problem for the group $PGL_2(11)$ is that, on the one hand, to obtain totally real fibers (i.e.\ a complex conjugation acting as the identity) 
 one needs to compute polynomials with at least four branch points. On the other hand $PGL_2(11)$ in its natural action has no generating genus zero tuples of length $r \ge 4$.
There are however genus zero tuples in the imprimitive action on 22 points, which stems from the exceptional action of $PSL_2(11)$ on 11 points (this degree-11 action was also used
by Malle to compute totally real $PSL_2(11)$-polynomials in \cite[Section 9]{Ma1}).
Below are explicit computations for two such class tuples.
\subsection{The ramification type $(2A,2B,2B,3A)$}
\subsubsection{Hurwitz data and assumptions on branch points}
Firstly, let $C=(2A,2B,2B,3A)$ the quadruple of classes of $PGL_2(11)$, where $3A$ is the unique class of elements of order 3, $2A$ is the class of involutions inside $PSL_2(11)$, and $2B$ the class of involutions outside $PSL_2(11)$. 
This is a genus zero tuple in the imprimitive action on 22 points, so for a degree-22 cover of $\mathbb{P}^1(\mathbb{C})$ with this ramification type, 
we get the following inclusion of function fields: $\cc(t) \subseteq \cc(s) \subseteq \cc(x)$, where exactly two places of $\cc(t)$ ramify in $\cc(s)$ 
(namely the ones with inertia group generator not contained in $PSL_2(11)$), and exactly four places of $\cc(s)$ ramify in $\cc(x)$ 
(namely two places lying over the ramified place of $\cc(t)$ with inertia group generator in $2A$, and two lying over the place of $\cc(t)$ with inertia group generator $3A$).
\par
The essential task is therefore to compute the extension $\cc(x)|\cc(s)$ , i.e. to compute polynomials with $PSL_2(11)$-monodromy, defined over $\qq$ if possible, and ramification type $(2A,2A,3A,3A)$.
The straight inner Nielsen class of these tuples in $PSL_2(11)$ is of length $|SNi^{in}|=54$, 
with transitive braid group action and symmetrized braid orbit genus $g_{12}=1$.\footnote{Additional symmetrization of the branch points 3 and 4 does not decrease this genus.}
Via Moebius transformations, we therefore assume that the two places of $\cc(s)$ with inertia group generator of order 3 are $s\mapsto 0$ and $s\mapsto \infty$, and also fix the sum of the other two branch points.
As the cycle structure of an element $\sigma$ in the class $3A$ of $PSL_2(11)$ in the action on 11 points is $(3^3.1^2)$, and one of the $3$-cycles remains fixed under conjugation with 
$N_{PSL_2(11)}(\langle\sigma\rangle)$ (and therefore under the action of the decomposition subgroup), one can assume w.l.o.g. for a model over $\qq$ that the place $x\mapsto 0$ lies over
$s\mapsto 0$ (with ramification index 3), and the same for $x\mapsto \infty$ and $s\mapsto \infty$.
That is, we may w.l.o.g.\ look for polynomial equations $x^3\cdot f_1(x)^3 \cdot f_2(x) - s\cdot g_1(x)^3 \cdot g_2(x)=0$, with quadratic polynomials $f_i,g_i$.
\subsubsection{Computations}
Due to the relatively small degree, one can immediately search for a mod-$p$ reduced polynomial with the above restrictions on places and the correct ramification,
instead of starting with a 3-point cover and going through the deformation process. There is a solution with the correct Galois group over $\mathbb{F}_7$.  
\par
Now lift this solution to many approximate $\mathbb{Q}_7$-solutions, with the set of zeroes of $s\cdot (s^2+4s+\lambda)$ as the finite ramification locus (for many different values of $\lambda$).
Interpolation then yields an algebraic dependency between the coefficients at $x^1$ of the above polynomials $g_1$ and $g_2$, namely:
$$(88/19\beta^2 - 112/19 \beta + 32/19)\alpha^4 + (178/19 \beta^3 - 524/19 \beta^2 + 446/19 \beta - 112/19) \alpha^3 $$
$$+ (287/38 \beta^4 - 650/19 \beta^3 + 2051/38 \beta^2 - 662/19 \beta + 295/38)\alpha^2 $$
$$+ (59/19 \beta^5 - 687/38 \beta^4 + 773/19 \beta^3 - 1675/38\beta^2 + 435/19 \beta - 173/38) \alpha$$
$$+ 10/19 \beta^6 - 70/19 \beta^5 + 21/2 \beta^4 - 595/38 \beta^3 + 491/38 \beta^2 - 213/38 \beta + 1=0$$
(with $\alpha$ the coefficient of $g_1$ and $\beta$ the one of $g_2$).
Computation with Magma confirms that this defines an elliptic curve of rank 1 (more precisely, this curve can be defined by the cubic equation $Y^2 = X^3 - 27X - 10$), which therefore has infinitely many points.
Furthermore all other coefficients of the model can be expressed as polynomials in $\alpha$ and $\beta$, therefore this curve is already a model of the reduced Hurwitz curve of the $PSL_2(11)$-family. 
So there are infinitely many equivalence classes of covers defined over $\qq$ with this monodromy.
\par
However, as we are interested in totally real polynomials, we need to choose a point on the curve in such a way, that complex conjugation on a fiber of the corresponding $PSL_2(11)$-cover 
is trivial in at least one segment of the punctured projective line.
Monodromy computations show that $\alpha=-\frac{3}{121}$ and $\beta=\frac{41}{55}$ yields such a point. This leads to the polynomial 
\begin{equation}
\begin{split}
\label{psl211_a}
f(s,x):=x^3(x^2+x-\frac{413}{4114})^3(x^2-\frac{23}{726}x+\frac{63}{181016})\\
-s(x^2-\frac{3}{121}x+\frac{567}{1131350})^3(x^2+\frac{41}{55}x-\frac{413}{102850}),
\end{split}
\end{equation}
where specializations of $s$ in the real interval $[-0.623.., -0.619..]$ (between the two algebraically conjugate branch points) lead to totally real fibers.
\par
Now all that is left is to parameterize the above extension $\cc(s)|\cc(t)$ over $\qq$ to fit the positions of the branch points. 
This leads to the following:
\begin{satz}
\label{pgl211a}
Let \[f_1(x):=x^3(x^2+x-\frac{413}{4114})^3(x^2-\frac{23}{726}x+\frac{63}{181016}),\]
\[f_2(x):=(x^2-\frac{3}{121}x+\frac{567}{1131350})^3(x^2+\frac{41}{55}x-\frac{413}{102850}),\] and
\[F(t,x):=f_1(x)^2 + \frac{27280791476537}{21954955473000}f_1(x)f_2(x) + \frac{766309482990625}{1985274409206528} f_2(x)^2 - tf_1(x)f_2(x) \in \qq(t)[x].\]
Then $F$ has regular Galois group $PGL_2(11)$ over $\qq(t)$ and possesses totally real specializations for all $t\mapsto t_0>135367$ (i.e.\ $t_0$ larger than the largest finite branch point).
The branch cycle structure with respect to $t$ is of type $(2^8.1^6, 2^{11}, 2^{11},3^6.1^4)$.
\end{satz}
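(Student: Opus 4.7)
The approach is to exploit the quadratic structure of $F$ in $(f_1,f_2)$, which hides a tower of function fields. Setting $u=f_1(x)/f_2(x)$, we rewrite $F(t,x)=f_2(x)^2\bigl(u^2+(c_1-t)u+c_0\bigr)$ where $c_1,c_0$ are the rational constants in the statement, so $F(t,x)=0$ is equivalent to $f_1(x)-uf_2(x)=0$ together with $u^2+(c_1-t)u+c_0=0$. Putting $s:=u$, this realises a tower $\qq(t)\subset\qq(s)\subset\qq(s,x)$, with the outer quadratic $\qq(s)/\qq(t)$ given by $t=s+c_1+c_0/s$ (ramified at $s=\pm\sqrt{c_0}$, i.e.\ at $t=c_1\pm2\sqrt{c_0}$), and the inner degree-$11$ layer being exactly the $PSL_2(11)$-extension from (\ref{psl211_a}). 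Consequently $\mathrm{Gal}(F/\qq(t))$ is an extension of $C_2$ by $PSL_2(11)$, and since $\mathrm{Out}(PSL_2(11))=C_2$ the only two possibilities are the split $PSL_2(11)\times C_2$ (with centre $C_2$) and the non-split $PGL_2(11)$ (trivial centre).

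To rule out the split case, I would use that in the $22$-point action of $PGL_2(11)$ (two blocks of $11$) a generic $\sigma\in PSL_2(11)$ acts as $\sigma$ on one block and as its outer-automorphism image $\phi(\sigma)\ne\sigma$ on the other, so conjugating $\sigma$ by the block-swapping involution returns $\phi(\sigma)\ne\sigma$; whereas in $PSL_2(11)\times C_2$ the swap is central and this never happens. Concretely one takes a branch cycle of type $(2^{11})$ (from $\qq(s)/\qq(t)$) and one of type $(2^8.1^6)$ (from the merged $(2A)$-place) and checks their commutator is non-trivial. A cheaper alternative is to specialize $t$ to a concrete rational value outside the branch locus and run a standard Galois-group verification on the resulting degree-$22$ polynomial, then appeal to Hilbert irreducibility.

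The branch cycle structure then falls out of the tower. The two places $t=c_1\pm2\sqrt{c_0}$ are ramified in $\qq(s)/\qq(t)$ and unramified above in $\qq(s,x)/\qq(s)$, yielding the two $(2^{11})$ entries. The $(3A)$-branches of (\ref{psl211_a}) at $s=0$ and $s=\infty$ are swapped by $s\mapsto c_0/s$ and hence project to the single $\qq(t)$-place $t=\infty$; inertia there acts as $(3^3.1^2)$ on each block, combining to $(3^6.1^4)$. The two $(2A)$-branches of (\ref{psl211_a}) form a $\qq$-conjugate pair $\alpha,\beta$, and the constant $c_0$ has been chosen precisely so that $\alpha\beta=c_0$, ensuring $s\mapsto c_0/s$ swaps them; they project to the single place $t=c_1+\alpha+\beta$ with inertia $(2^4.1^3)$ on each block, combining to $(2^8.1^6)$. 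This accounts for all four branches with exactly the claimed cycle structure.

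Finally, the totally-real claim uses the standard fact (cf.\ \cite[Ch.\ I, Ex.\ 10.2]{MM}) that complex conjugation on the braid orbit is constant on each open real interval between consecutive real branch points, and is the identity exactly on those intervals carrying totally real fibres. All four branch points above are real, and a numerical evaluation using the explicit $\alpha,\beta$ from (\ref{psl211_a}) together with the stated $c_1,c_0$ identifies $135367$ as the largest finite one. The principal obstacle is verifying triviality of complex conjugation on $(135367,\infty)$ itself: this is an honest braid-monodromy computation (following \cite[Lemma 10.9]{Voe}), best carried out by starting from a $t$-value whose fibre can be connected to the already-verified totally real $s$-window $[-0.623,-0.619]$ of (\ref{psl211_a}) and tracking how conjugation transforms as $t$ crosses each intervening real branch point. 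In practice it is easiest to fix one concrete $t_0>135367$, verify numerically that the $22$ roots of $F(t_0,x)$ are real, and propagate by the constancy statement above.
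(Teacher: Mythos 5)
Your reduction of $F$ to the tower $\qq(t)\subset\qq(s)\subset\qq(s,x)$ via $u=f_1/f_2$ and $t=u+c_1+c_0/u$ is exactly the paper's starting point, and your treatment of the branch cycle structure and of the totally-real interval is sound. But there is a genuine gap at the pivotal step ``consequently $\mathrm{Gal}(F/\qq(t))$ is an extension of $C_2$ by $PSL_2(11)$.'' The splitting field of $F$ over $\qq(t)$ is the compositum $E\cdot E^{\sigma}$, where $E$ is the splitting field of $f(s,x)$ over $\qq(s)$ and $\sigma\colon s\mapsto c_0/s$ generates $\mathrm{Gal}(\qq(s)/\qq(t))$; since $PSL_2(11)$ is simple, either $E=E^{\sigma}$ (and then your order-$1320$ dichotomy applies) or $E\cap E^{\sigma}=\qq(s)$ and the Galois group is the \emph{full} wreath product $PSL_2(11)\wr C_2$ of order $2\cdot 660^2$. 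Nothing in your argument excludes the second alternative: both $E$ and $E^{\sigma}$ are $PSL_2(11)$-covers with the same branch locus but could be distinct points of the Hurwitz space. The paper closes exactly this gap by a group-theoretic computation: among \emph{all} transitive subgroups of $PSL_2(11)\wr C_2\le S_{22}$, only $PGL_2(11)$ admits a generating $4$-tuple with product $1$ and cycle types $(2^8.1^6,\,2^{11},\,2^{11},\,3^6.1^4)$, which simultaneously eliminates the full wreath product and $PSL_2(11)\times C_2$. Your two proposed ways of finishing do not repair this: the commutator test is inconclusive because in $PSL_2(11)\times C_2$ a branch cycle of type $2^{11}$ need not be the central involution (it can be $(\rho,z)$ with $\rho\ne 1$, so the commutator with a block-preserving element can be non-trivial), and the ``specialize and invoke Hilbert irreducibility'' route only yields the lower bound $PGL_2(11)\le \mathrm{Gal}(F/\qq(t))$, since one cannot certify that a chosen $t_0$ lies outside the thin exceptional set.

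A second, smaller omission: you take for granted that the inner degree-$11$ layer has group $PSL_2(11)$, whereas establishing this is the bulk of the paper's proof. It is done without numerical monodromy by producing an explicit second cover $\tilde{s}=\tilde{s}(s)$ in the same Hurwitz family with the same ramification locus, verifying that $f(\tilde{s},x)$ factors over $\qq(s)$ into degrees $5$ and $16-11=6$ (exploiting the two non-conjugate index-$11$ subgroups of $PSL_2(11)$), and combining this intransitivity with Dedekind reduction to exclude $A_{11}$ and $S_{11}$. You would need to either reproduce this or cite it explicitly before the tower argument can start. Finally, note that regularity of the extension requires a word (the paper uses that $PGL_2(11)$ is self-normalizing in $S_{22}$).
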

\begin{proof}
$F$ is gained from the polynomial $f$ in \eqref{psl211_a} by setting 
\[t:=(s^2 + \frac{27280791476537}{21954955473000}s + \frac{766309482990625}{1985274409206528})/s.\]
We therefore first prove that $f$ has Galois group $PSL_2(11)$.\par
As in Section \ref{psl34}, we compute an explicit algebraic dependency for the natural (degree 54) cover of the reduced Hurwitz space over $\mathbb{P}^1$. 
We use this to find a second cover with the same ramification locus as the one given by $f$, and then make use of the fact that $PSL_2(11)$ has two non-conjugate subgroups of index 11. 
Set \[\tilde{s} = -(\frac{295}{726})^3 \cdot \frac{s^3\cdot (s^2+s+693/850)^3\cdot(s^2+1107/295\cdot s -5103/50150)}{(s^2+ 297/1475\cdot s -5103/1253750)^3\cdot(s^2+ 46/25\cdot s + 12474/10625)}.\]
Then $f(\tilde{s},x)$ splits over $\qq(s)$ into polynomials of degree 5 and 6. This shows that $Gal(f|\qq(s))$ has an intransitive index-11 subgroup, and so it cannot be equal to $A_{11}$ or $S_{11}$.
Dedekind reduction then leaves only $PSL_2(11)$.
So $f$ has Galois group $PSL_2(11)$ over $\qq(s)$, and regularity is obvious. Therefore $Gal(F|\qq(t))$ is a transitive subgroup of the wreath product $PSL_2(11)\wr C_2 < S_{22}$. 
Now one can check immediately that the only transitive subgroup of this wreath product with a generating 4-tuple (with product 1) of the necessary cycle structure is $PGL_2(11)$. 
So $PGL_2(11)$ is the geometric Galois group of $F$, and regularity follows because $PGL_2(11)$ is self-normalizing in $S_{22}$.\par
Finally, the assertion about totally real specializations is easy to verify.
\end{proof}
\subsection{The ramification type $(2A,2A,2B,4A)$}
\subsubsection{Hurwitz data and assumptions on branch points}
We consider another family, namely (in analogy to the above notation) the one associated to the class quadruple $(2A,2A,2B,4A)$ in $PGL_2(11)$.
Again, looking at the imprimitive action of $PGL_2(11)$ on 22 points, this monodromy leads to function fields $\cc(t) \subseteq \cc(s) \subseteq \cc(x)$. 
This time, the $PSL_2(11)$-part $\cc(x)|\cc(s)$ is ramified over 5 points, with monodromy of type $(2A,2A,2A,2A,2A)$. We therefore look for points on a reduced Hurwitz space of dimension 2. 
However, we do not need to parameterize the whole surface.\par
Suitable choice of the branch points in $\cc(t)$ and $\cc(s)$ leads to a model for a two-parameter polynomial, corresponding to a curve on the Hurwitz space.
Firstly, we can map the branch points of $\cc(t)$ to $0$, $\infty$ and $-1\pm\alpha$, with $\alpha^2\in \qq$ (for a rational model) and only the places at zero and infinity ramifying in $\cc(s)$. 
Therefore, by setting $t=s^2$, we may assume that the finite ramification locus of $s$ in $\cc(x)$ is $\pm \sqrt{-1-\alpha}$, $\pm \sqrt{-1+\alpha}$, 
and therefore the set of zeroes of the polynomial $s^4+2s^2+(1-\alpha^2)=:s^4+2s^2+\lambda$.
\par
We can use the braid criteria exhibited in \cite{De} to confirm the existence of a cover $\mathcal{C} \to \mathbb{P}^1$, where $\mathcal{C}$ is a curve of genus 1, 
parameterizing the polynomials with the above monodromy and restrictions on branch points. 
More precisely, our restrictions on the branch points lead to the same braids ($R_0:=\beta_1\beta_4$ and $R_1:=\beta_2\beta_3\beta_2$) as curve no.\ 14 on p.\ 49 in \cite{De}; 
the group generated by these braids acts intransitively on the inner Nielsen class of $PSL_2(11)$-generating tuples of type $(2A,2A,2A,2A,2A)$, 
with an isolated orbit of length 48 and corresponding braid orbit genus 1.
(Alternatively, observe that the 4-tuple in $PGL_2(11)$ with which we started to obtain the restrictions on the branch points has a Hurwitz curve of genus 1.)
\subsubsection{Computations}
As a starting point for the computations, we used a polynomial with 4 branch points and $PSL_2(11)$ monodromy, computed by Malle in \cite{Ma1}.
Develop this into a cover with 5 branch points (as done in the previous examples), and observe that the normalizer of an involution in $PSL_2(11)$ fixes one of the 2-cycles, 
therefore we can assume a polynomial equation $f(x)-s \cdot g_1(x)^2 \cdot g_2(x)=0$, with $deg(f)=11$ and $deg(g_i)=3$ 
(i.e. the infinite place of $\cc(x)$ lies over the infinite place of $\cc(s)$, with ramification index 2).
\par
Specializing the coefficients of $g_1$ and $g_2$ at $x^2$ to sufficiently many rational values again allowed an interpolation polynomial (of degree 4 in both variables),
and Magma computation again yields that this polynomial defines an elliptic curve of rank 1.\par
Now the procedure is the same as for the previous family: find a point on this curve that allows for a totally real fiber cover (one such point yields the polynomial
\begin{equation*}
\begin{gathered}
\label{psl211_b}
g(s,x):=(x-1)(x^5+9x^4+11x^3-65x^2-176x-1292/11)\\\cdot (x^5+14x^4-17/2x^3-18x^2+1/2x+5/11)
-s(x^3+x-14/11)^2(x^3+4x^2+5x+18/11)
\end{gathered}
\end{equation*}
with Galois group $PSL_2(11)$), and compose the resulting parameterization of $s$ as a rational function in $x$ with $t=s^2$.
\par
The Galois group can of course be verified just like in Theorem \ref{pgl211a}.
In this case, we also computed a degree-12 polynomial defining the stem field of a stabilizer in $PGL_2(11)$ in its natural action on 12 points. 
This is the polynomial $\tilde{g}$ in Theorem \ref{pgl211} below. It was found in the following way: Let $E$ be the splitting field of the above polynomial $g$ over $\qq(s)$. 
A primitive element of a subfield of $E$ of degree $12$ over $\qq(s)$ (corresponding to the stabilizer in $PSL_2(11)$ in its action on 12 points) can be computed with Magma. 
From this, one obtains a primitive element of the corresponding degree-12 extension of $\qq(s^2)$ as well.
By the Riemann-Hurwitz genus formula, this field is of genus 2. Therefore its gonality is 2. Via computation of Riemann-Roch spaces a rational subfield of index 2 can explicitly be parameterized. 
A few linear transformations then yielded the following polynomial:
\begin{satz}
\label{pgl211}
The polynomial 
{\footnotesize
\[\tilde{g}(t,x)=((x^3 + x^2 + \frac{1}{4}x + \frac{1}{22})^4 (t+1249)-364 (x^2 + \frac{5}{7}x - \frac{1}{44})\]
\[ (x^4 - \frac{137}{110}x^2 - \frac{3}{5}x - \frac{623}{9680}) (x^6 + \frac{36}{143}x^5 - \frac{323}{143}x^4 - \frac{6381}{3146}x^3 - \frac{9671}{25168}x^2 + \frac{5715}{138424}x - \frac{7035}{553696}))  t\]
\[-\frac{3^3\cdot 5^2\cdot 7\cdot 11}{4} (x^5 + 2x^4 + \frac{321}{550}x^3 - \frac{427}{550}x^2 - \frac{2771}{9680}x + \frac{401}{5324})^2 (x^2 + \frac{632}{693}x - \frac{6914}{22869})\]}
has regular Galois group $PGL_2(11)$ over $\qq(t)$.
The branch cycle structure with respect to $t$ is of type $(2^6, 2^6, 2^5.1^2, 4^3)$.
Furthermore, if $a$ is the unique branch point of $\tilde{g}$ inside $(0, +\infty)$, i.e.\ $a\approx 14.755$ the positive root of $t^2 + 1249t - 20511149/1100=0$, then
for $t_0\in (0,a)$, the specialized polynomial $\tilde{g}(t_0,x)$ is totally real.
\end{satz}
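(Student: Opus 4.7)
The proof has three claims to establish: (i) the Galois group is $PGL_2(11)$ in its degree-$12$ action, (ii) the stated branch cycle structure, and (iii) the totally real specialisation interval.

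For (i), my plan is to exploit the construction of $\tilde{g}$. The polynomial arises from the $PSL_2(11)$-polynomial $g(s,x)$, which has Galois group $PSL_2(11)$ over $\qq(s)$ (the argument being entirely parallel to Theorem \ref{pgl211a}, using the two non-conjugate index-$11$ subgroups of $PSL_2(11)$). Under the substitution $t=s^2$, the splitting field $E$ of $g$ contains the quadratic extension $\qq(s)\supset\qq(t)$, so $E$ is Galois over $\qq(t)$ of degree $1320$, with Galois group either $PSL_2(11)\times C_2$ or the non-split extension $PGL_2(11)$; the latter is identified by verifying that conjugation by the non-trivial element of $\mathrm{Gal}(\qq(s)/\qq(t))$ fuses the two classes of order $11$ in $PSL_2(11)$. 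The field $\qq(t)[x]/(\tilde{g}(t,x))$ is, by the construction via gonality and Riemann--Roch, the stem field of a point stabiliser in the natural degree-$12$ action of $PGL_2(11)$, so $\mathrm{Gal}(\tilde{g}/\qq(t))=PGL_2(11)$ in that action; regularity is inherited from $g/\qq(s)$. A self-contained check proceeds by Dedekind reduction to force $PGL_2(11)\subseteq\mathrm{Gal}$, then by ruling out the transitive overgroups in $S_{12}$: $PGL_2(11)\not\subseteq M_{12}$ since $PSL_2(11)$ is already maximal in $M_{12}$; $PGL_2(11)\not\subseteq A_{12}$ since $PGL_2(11)$ contains a $10$-cycle (arising from $\mathrm{diag}(g,1)$ acting on $\mathbb{P}^1(\ff_{11})$); and $S_{12}$ is excluded because $\mathrm{Gal}$ inherits from the construction the normal subgroup $PSL_2(11)$ of index $2$, which is not a normal subgroup of $S_{12}$.

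For (ii), I would compute $\mathrm{disc}_x(\tilde{g})\in\qq[t]$, factor it over $\qq$, and for each irreducible factor (a Galois orbit of branch points) determine the local inertia type via Newton polygon analysis or Puiseux expansion of $\tilde{g}$ at the branch point. This straightforwardly yields the cycle structures $2^6$, $2^6$, $2^5.1^2$, $4^3$; as a consistency check, the total index $6+6+5+9=26$ gives genus $2$ for the stem field via Riemann--Hurwitz, matching the genus used in the construction.

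For (iii), I would extract the real branch points from the factorisation in (ii), confirming that $a$ (the positive root of $t^2+1249t-20511149/1100$) is the unique real branch point in $(0,+\infty)$. Since the number of real roots of $\tilde{g}(t_0,x)$ is locally constant on the complement of the real branch locus --- roots can enter or leave $\rr$ only by passing through a branch point --- it suffices to verify at a single test value $t_0\in(0,a)$, e.g.\ $t_0=1$, that all $12$ roots of $\tilde{g}(t_0,x)$ are real; a routine Sturm-sequence or interval-arithmetic computation accomplishes this. The principal obstacle is the Galois group verification, specifically the exclusion of $S_{12}$, which neither Dedekind reduction nor a direct cycle-type argument rules out on its own: here the normal-subgroup structure inherited from the construction of $\tilde{g}$ from $g$ (equivalently, the existence of the intermediate quadratic extension $\qq(s)\supset\qq(t)$ inside the Galois closure) is the essential ingredient.
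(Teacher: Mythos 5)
Your overall strategy---establish $PSL_2(11)$ for $g$ over $\qq(s)$ via the two non-conjugate index-$11$ subgroups, then promote to $PGL_2(11)$ over $\qq(t)$ by group theory, and handle the ramification data and real fibers by direct computation---is the route the paper takes; its own justification of this theorem is essentially a one-line reference back to Theorem \ref{pgl211a}. Your parts (ii) and (iii) are fine as stated, and the Riemann--Hurwitz consistency check (total index $6+6+5+9=26$, hence genus $2$ for the stem field) matches the genus the paper uses in constructing $\tilde{g}$.

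The step I cannot accept as written is the inference ``the splitting field $E$ of $g$ contains the quadratic extension $\qq(s)\supset\qq(t)$, so $E$ is Galois over $\qq(t)$ of degree $1320$.'' Containing a Galois subextension does not make a field Galois over the base: one needs $E$ to be stable under a lift of $s\mapsto -s$, and a priori the Galois closure of $\tilde{g}$ over $\qq(t)$ could be the compositum $E\cdot E^{\sigma}$, whose group is only known to be a transitive subgroup of the wreath product $PSL_2(11)\wr C_2$ and could be much larger than $1320$. This is exactly the case the proof of Theorem \ref{pgl211a} is built to handle: one asserts only the wreath-product containment and then uses the branch cycle types to identify $PGL_2(11)$ as the unique transitive subgroup admitting a generating $4$-tuple of the required cycle structure. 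That mechanism repairs your argument, and it also disposes of $PSL_2(11)\times C_2$ more cleanly than your class-fusion criterion (for which you give no method of verification): that group has no element of order $4$, whereas the inertia generator of type $4^3$ does. Your alternative exclusion of $S_{12}$ suffers from the same circularity---``$\mathrm{Gal}$ inherits from the construction the normal subgroup $PSL_2(11)$ of index $2$'' is precisely the assertion that needs proof, and it rests on the (Magma-computed) identification of the stem field of $\tilde{g}$ inside $E$, which should be made explicit if the argument is to be self-contained. A minor terminological point: $PGL_2(11)$ is a split extension of $PSL_2(11)$ by $C_2$ (it contains outer involutions, the class $2B$); it is simply not the direct product.
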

\section{Totally real extensions with group $G=PSL_3(3)$}
\label{totally_real2}
\subsection{A theoretical argument}
Computing totally real $PSL_3(3)$-extensions might be possible via covers with four branch points; however, there are no genus zero 4-tuples with a Hurwitz curve of genus zero in $PSL_3(3)$. 
We therefore solve the problem via a family of covers with five branch points, with branch cycle structure $(2^4.1^5,2^4.1^5,2^4.1^5,3^3.1^4,3^3.1^4)$ in the natural permutation representation
of $PSL_3(3)$. 
The reason is that this family can be seen to give rise to totally real $PSL_3(3)$-extensions via purely theoretical criteria:
\begin{prop}
\label{psl33orb}
In $PSL_3(3)$ (in its natural degree 13 action), let $2A$ be the class of involutions of cycle type $2^4.1^5$ and $3A$ be the class of elements of cycle type $3^3.1^4$.
Then the inner Hurwitz space of $C:=(2A,3A,2A,3A,2A)$ contains a rational genus zero curve over $\mathbb{Q}$, and therefore infinitely many $\mathbb{Q}$-points.
Furthermore, among these $\mathbb{Q}$ points, there are some that lead to totally real $PSL_3(3)$-polynomials.
\end{prop}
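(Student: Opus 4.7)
I would split the proof into two independent parts: producing a genus zero curve inside $\mathcal{H}^{in}(C)$ that is rational as a $\mathbb{Q}$-variety, and then showing that some of its $\mathbb{Q}$-points yield totally real fibres.

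For the first part the plan is to exploit the symmetry of $C$: the $5$-tuple has stabilizer $S_3\times S_2$ in $S_5$ (permuting the three $2A$-positions and the two $3A$-positions), inducing an action on the inner Nielsen class. Pass to this partially symmetrized Hurwitz space, and then cut down to a curve by fixing the branch point configuration via $PGL_2$ together with one additional parameter; e.g., send the two $3A$-branch points to $\{0,\infty\}$ and fix one elementary symmetric function of the three $2A$-branch points, leaving a single free parameter $\lambda$. The resulting cover $\mathcal{C}\to\mathbb{P}^1_\lambda$ has ramification structure determined by the images of the relevant Hurwitz braid group generators on the symmetrized Nielsen class; one computes via Riemann--Hurwitz (with Magma) that it has genus zero. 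An analogous construction was used for the $PSL_5(2)$ family in Section \ref{psl52}, where a 5-tuple is viewed as a rational translate of a 4-tuple in the larger group $Aut(PSL_5(2))$; the same trick may well apply here, translating $C$ to a 4-tuple in $Aut(PSL_3(3))$.

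To conclude $\mathcal{C}\cong\mathbb{P}^1_{\mathbb{Q}}$, I would invoke the odd-cycle criterion of \cite[III.7.4]{MM}: if one of the braid generators acts on the Nielsen class with a cycle of odd length, a $\mathbb{Q}$-rational place of odd degree exists on $\mathcal{C}$, which combined with genus zero forces rationality. If this criterion fails, one would locate an explicit $\mathbb{Q}$-rational cusp by degenerating $C$ to a rigid three-point cover defined over $\mathbb{Q}$, in the style of Section \ref{psl52}. Infinitely many $\mathbb{Q}$-points then follow immediately.

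For the totally real claim I would exploit the palindromic shape of $C=(2A,3A,2A,3A,2A)$: placing the five branch points in this order on $\mathbb{R}$, complex conjugation acts on the corresponding Nielsen tuple by a reflection-and-inversion involution, and a totally real fibre over a suitable real interval corresponds to a tuple $(\sigma_1,\ldots,\sigma_5)$ fixed by this involution up to inner conjugation, i.e.\ $\sigma_{6-i}=\sigma_i^{-c}$ for some $c\in PSL_3(3)$. Since both $2A$ and $3A$ are rational classes, the palindromic class type is compatible with such a fixed tuple, and one should exhibit it by direct enumeration inside the relevant braid orbit. The main obstacle I anticipate is the rationality step: the braid-orbit genus calculation is routine, but proving rationality over $\mathbb{Q}$ depends either on a favourable odd cycle among the braid images or on successful construction of a $\mathbb{Q}$-cusp, both of which may require substantial computation.
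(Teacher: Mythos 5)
Your first half is essentially the paper's argument. The paper restricts to a specific one-parameter family of branch-point configurations (curve no.\ 26 in Dettweiler's list \cite{De}, generated by the braids $B_0=\beta_2\beta_3\beta_2$ and $B_1=\beta_1^2\beta_4^2$), finds that these act intransitively on the $120$ tuples of $SNi^{in}(C)$ with orbits of lengths $12$, $48$ and $60$, and reads off a rational genus zero curve from the cycle structures on the length-$12$ orbit; it also records precisely your alternative, viewing $C$ as a rational translate of a $4$-tuple in $Aut(PSL_3(3))$ (classes of cycle type $(2^8.1^{10},2^{13},3^6.1^8,4^4.2^5)$ on $26$ points) with braid orbit genus $0$. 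Your particular choice of curve (two $3A$-points at $0,\infty$, one symmetric function of the $2A$-points fixed) is different and its genus is not verified; and since the braid action on such a restricted family is intransitive here, you would also need to argue that the orbit you pick gives a component defined over $\mathbb{Q}$ (in the paper's situation this is automatic because the three orbit lengths are pairwise distinct). These are contingencies of the computation rather than conceptual errors.

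The genuine gap is in the totally real part. The condition you impose, $\sigma_{6-i}=\sigma_i^{-c}$ for \emph{some} $c\in PSL_3(3)$, is the condition for the point of the inner Hurwitz space to be fixed by complex conjugation, i.e.\ for the cover to be definable over $\mathbb{R}$; it does not characterize totally real fibers. A cover defined over $\mathbb{R}$ has complex conjugation acting on a fiber over a real, unramified base point as some involution in the Galois group, and the fiber is totally real exactly when that element is the identity. The correct criterion (Debes--Fried \cite{DF}, or \cite[Thm.\ I.10.3]{MM}, which is what the paper invokes) computes this element explicitly from the branch cycles: for all branch points real and ordered it acts by $\sigma_i\mapsto(\sigma_1\cdots\sigma_{i-1})\,\sigma_i^{-1}\,(\sigma_1\cdots\sigma_{i-1})^{-1}$ up to a global conjugation, with no index reversal, and one must exhibit a tuple in the chosen braid orbit on which the \emph{identity} realizes complex conjugation; allowing an arbitrary $c$ loses exactly the content of the claim. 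Moreover, exhibiting such a tuple only produces a real point of the Hurwitz curve with totally real fibers; to obtain $\mathbb{Q}$-points as asserted you still need that rational points are dense in the real locus of the rational genus zero curve and that the totally-real-fiber property is locally constant there --- a step the paper makes explicitly and your proposal omits.
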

\begin{proof}
The group generated by the braids $B_0:=\beta_2\beta_3\beta_2$ and $B_1:=\beta_1^2\beta_4^2$ acts intransitively on the 120 $PSL_3(3)$-generating $5$-tuples of the straight inner Nielsen class 
$SNi^{in}((2A,3A,2A,3A,2A))$. This braiding action corresponds to curve no.\ (26) given on p.51 in Dettweiler's list of curves on Hurwitz spaces in \cite{De}. 
The orbits under this action are of lengths 12, 48 and 60; and the cycle structure of the braids in the action on the orbit of length 12 yields a (rational) genus zero curve on the Hurwitz space.
\par 
Alternatively, observe that the 4-tuple of classes in $Aut(PSL_3(3))$ (as an imprimitive permutation group on 26 points) with cycle structures $(2^8.1^{10}, 2^{13}, 3^6.1^8, 4^4.2^5)$ has braid orbit genus $g=0$. 
Our $PSL_3(3)$-5-tuple becomes a rational translate of this 4-tuple in a natural way, via ascending to the $PSL_3(3)$-fixed field. Therefore every rational point on the genus zero Hurwitz curve for the 4-tuple also 
yields a regular realization of $PSL_3(3)$ with the desired monodromy.
\par
The statement about totally real polynomials follows from group theoretic considerations. One only needs to find an element of our braid orbit where the identity element of $PSL_3(3)$
acts as complex conjugation on the branch cycles, as described in \cite[Thm.\ I.10.3]{MM}.
This yields the existence of $PSL_3(3)$-covers with totally real fibers, and as rational points are dense around real points on our $g=0$-Hurwitz curve, there are also such covers defined over
$\mathbb{Q}$.
\end{proof}
\subsection{Explicit computation}%\marginpar{Direkt 2-dim.?}
As a starting point for the computations, we use a 4-point cover with group $PSL_3(3)$, with branch cycle structure $(2A,3A,3A,4A)$, as computed by Malle in \cite{Ma1}.
From this, the usual deformation process of Section \ref{deform}
leads to a 5-point cover with the above cycle structure, after writing the element of order $4$ as a product of two involutions in $PSL_3(3)$.\par
Once this is achieved, Proposition \ref{psl33orb} yields a recipe to compute a two-parameter family of
$PSL_3(3)$-polynomials - parametrized by a rational curve on the Hurwitz space - and specialize appropriately to obtain totally real extensions. 
This has been carried out in \cite[Chapter 8.2]{Koe}. The computations are analogous to the ones that have been performed several times by now. 
However, it turned out that the same ramification type also yields a three-parameter family of covers defined over $\mathbb{Q}$ - 
something that did not seem obvious from the theoretical arguments. We therefore describe the computation leading to this stronger result.
\subsection{A three-parameter family}
Explicit computations show that the reduced Hurwitz space $\mathcal{H}$, consisting of equivalence classes of covers with partially ordered branch point set 
$(\{\text{ zeroes of } t^3+t^2+at+b\},0,\infty)$ (with parameters $a,b$) and monodromy as above, does not only contain rational curves, but is in fact a rational surface.
Its function field is therefore of the form $\qq(\alpha,\beta)$ with independent transcendentals $\alpha$, $\beta$. 
In other words, there is a three-parameter family $f(\alpha,\beta,t,x)$ of $PSL_3(3)$-polynomials over $\qq(t)$, with branch point restrictions as above.
This family was found, beginning with any member of the two-parameter family in \cite[Chapter 8.2]{Koe}, by once again applying the techniques
of Section \ref{algdep}. Lifting an inital mod-$p$ solution to many different polynomials with the above restrictrion on branch points yielded an algebraic equation between three suitable
coefficients, say $\alpha, \gamma$ and $\delta$. Luckily, the curve given by this equation over the constant field $\qq(\alpha)$, was of genus zero, and even rational. 
Riemann-Roch space computations therefore yield its parameter $\beta$ - as a rational function in $\alpha, \gamma$ and $\delta$. Finally, algebraic dependencies between $\alpha$, $\beta$
and any of the remaining coefficients of the model lead to the following nice result:
\begin{satz}
\label{psl33}
The polynomial
\[f(\alpha,\beta,t,x):=f_0^3\cdot f_1\cdot x-t\cdot g_0^3\cdot g_1 \in \qq(\alpha,\beta)(t)[X], \text{ with }\]  
\[f_0:=x^3+\beta  x^2+(\beta-3)  x-\frac{1}{9} \alpha\beta^2 + \frac{4}{9} \alpha\beta - \frac{4}{3}\alpha,\]
\[f_1:=x^3+\frac{\alpha\beta^2 - 4\alpha\beta + 12\alpha - 3\beta^2 - 9}{(\beta-3)^2}  x^2+\frac{\alpha\beta^2 - 4 \alpha\beta + 12\alpha - 9\beta - 9}{3(\beta - 3)}  x-1,\]
\[g_0:=x^3+\alpha  x^2+\frac{1}{3} \alpha\beta  x+\frac{1}{9} \alpha\beta - \frac{1}{3} \alpha,\]
\[g_1:=\alpha  x^3+\frac{4 \alpha\beta - 3\alpha + 9}{3}  x^2+\frac{4 \alpha\beta^2 - 6 \alpha\beta + 9\alpha + 9\beta - 27}{9}  x-\alpha.\]
has regular Galois group $PSL_3(3)$ over $\qq(\alpha,\beta)$.
Suitable specializations for $\alpha,\beta$ and $t$ yield totally real $PSL_3(3)$-extensions. %\marginpar{Genaue Intervalle?}
\end{satz}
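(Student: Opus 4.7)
The plan is to follow the Galois group verification template of Theorem \ref{pgl211a} and the argument used in Section \ref{psl52}. First, I would compute the discriminant of $f$ with respect to $x$ to confirm that, for generic $(\alpha,\beta)$, the ramification locus in $t$ consists precisely of $t\mapsto 0$ and $t\mapsto \infty$ (with inertia of cycle type $3^3.1^4$, coming from the triple factors $f_0^3$ and $g_0^3$), together with three further points forming a Galois-conjugate triple over $\qq(\alpha,\beta)$ (with inertia of type $2^4.1^5$). This certifies that $f$ belongs to the Nielsen class considered in Proposition \ref{psl33orb}. Dedekind reduction at a few rational specializations of $(\alpha,\beta)$, combined with the list of transitive subgroups of $S_{13}$, then forces the generic Galois group $G:=\mathrm{Gal}(f\mid \qq(\alpha,\beta)(t))$ to contain $PSL_3(3)$.

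The main obstacle is excluding the two remaining candidates $A_{13}$ and $S_{13}$. I would adapt the trick used in Theorem \ref{pgl211a}, exploiting the two non-conjugate index-$13$ subgroups of $PSL_3(3)$ (the point- and line-stabilizers in $PG(2,3)$). Concretely, using the rational parametrization $\mathbb{P}^2_{\alpha,\beta}$ of the reduced Hurwitz space, I would locate a second specialization $(\alpha_1,\beta_1)$ yielding the same ordered ramification locus in $t$ as some chosen base point $(\alpha_0,\beta_0)$ (after a M\"obius adjustment in $t$). Writing $f(\alpha_i,\beta_i,t,x) = p_i(x) - t\cdot q_i(x)$ for $i=0,1$, I expect the polynomial $p_0(x)q_1(y) - q_0(x)p_1(y) \in \qq[x,y]$ to factor nontrivially in $\qq[x,y]$, with factor degrees matching the point--line incidence pattern in $PG(2,3)$. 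No such intransitive index-$13$ subgroup exists in $A_{13}$ or $S_{13}$, so $G\cong PSL_3(3)$ at this specialization, and by irreducibility of the Hurwitz surface, generically as well. Regularity then follows from the fact that $PSL_3(3)$ is self-normalizing in $S_{13}$, so the arithmetic monodromy cannot strictly exceed the geometric one without destroying the prescribed cycle structure.

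For the last assertion, the theoretical existence of rational points on the Hurwitz space at which complex conjugation acts trivially on the branch cycles is already provided by Proposition \ref{psl33orb}. Since the parameter variety is birational to $\mathbb{P}^2_{\alpha,\beta}$, its $\qq$-points are dense in the real locus, and I can specialize $(\alpha,\beta)$ arbitrarily close to such a ``real'' point. A direct numerical monodromy computation for one concrete pair $(\alpha_0,\beta_0)$ near this locus then pinpoints an explicit open interval of values for $t$, lying between consecutive real branch points of $f(\alpha_0,\beta_0,t,x)$, on which the specialization produces a totally real $PSL_3(3)$-extension of $\qq$.
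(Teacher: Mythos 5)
Your proposal is correct and follows essentially the same route as the paper: the paper's own (very terse) proof verifies the Galois group "using the two non-conjugate index-13 subgroups of $PSL_3(3)$" --- i.e.\ exactly the point/line-stabilizer factorization trick you spell out, modelled on the proofs in Sections \ref{psl52} and \ref{psl34} --- and handles the totally real assertion by exhibiting one concrete specialization ($\alpha\mapsto -9$, $\beta\mapsto -6$, $t_0$ between the two smallest real branch points) rather than classifying all of them, just as you suggest. Your write-up merely makes explicit the steps the paper leaves implicit.
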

\begin{proof}
The Galois group can again be verified using the two non-conjugate index-13 subgroups of $PSL_3(3)$.
As for totally real extensions, it would be somewhat complicated to classify all possibilities for specializations of $\alpha,\beta$ and $t$. We therefore content ourselves with
the special case $\alpha\mapsto-9$, $\beta\mapsto-6$. In this case, all choices $t\mapsto t_0$ with $t_0$ between the two smallest real branch points, i.e. $t_0\in  (-4.37.., -2.47..)$,
yield totally real specializations.
\end{proof}
The above family leads to $PSL_3(3)$-polynomials with various other ramification types as well. One particularly interesting observation is that
$f(\alpha,\beta,t,x)$ (as a polynomial in $x$) also defines a genus zero extension with respect to $\alpha$ (not just with respect to $t$!), 
although not in rational parameterization. The branch cycle structure with respect to $\alpha$ consists of six involutions (all of cycle structure $(2^4.1^5)$).
{\bf Remark:}\\
 The next open cases with regard to totally real Galois extensions occur for the permutation degree $n=14$: 
 there are no explicitly known totally real Galois extensions of $\qq$ with Galois group $PSL_2(13)$ or $PGL_2(13)$.
For these groups, the genus zero approach will no longer work. This is obvious for $PGL_2(13)$, as this group does not possess any generating genus zero tuples of length $\ge 4$. 
For $PSL_2(13)$, there is just one rational genus zero $4$-tuple (of cycle type $(2A,2A,2A,3A)$), with a Hurwitz curve of genus $g=1$.
One might therefore hope for an elliptic curve of rank $\ge 1$, as in the above $PGL_2(11)$-cases. However, explicit computation showed that this is an elliptic curve of rank zero 
(and more precisely, can be defined by $y^2=x^3 - 25x^2 + 136x - 180$), with no rational points leading to covers with real fibers. 
%\end{remark}
%
\section{Totally real extensions with groups $M_{22}$ and $Aut(M_{22})$}
\label{totally_real3}
The automorphism group $Aut(M_{22})=M_{22}.2$ of the Mathieu group $M_{22}$ has rational Hurwitz curves for genus zero $4$-tuples, which however do not give rise to totally real specializations.
We therefore computed the Hurwitz space for the family of cycles structures $(2^7.1^8, 2^8.1^6, 2^{11}, 6^2.3^2.2^2)$. The braid orbit is of length 30, and the Hurwitz curve of genus 1.
Once again, the elliptic curve turns out to be of rank one, and does indeed provide rational points belonging to totally real fibers. We only give one example:
\begin{satz}
\label{m22}
The polynomial 
$$f(t,x):=9180125(x^2 + 77x - 572)^6 (x^2 + 2816)^3 (2439x^2 - 10318x + 10912)^2 $$
$$- t  (367205x^8 + 59565800x^7 - 3770832472x^6 - 791515446176x^5 - 14589494734496x^4 + 556611262821376x^3 $$ $$+ 1682125644320768x^2 - 12791977299017728x + 14606802351030272)^2$$
$$  (405x^6 + 52290x^5 + 5828131/8   x^4 - 433357099/8  x^3 + 21649071627/32  x^2 - 3030076231x + 3867113368)$$
 has regular Galois group $Aut(M_{22})$ over $\qq(t)$, with ramification structure $(2^7.1^8, 2^8.1^6, 2^{11}, 6^2.3^2.2^2)$ with regard to $t$.
 For all $t_0 > 1$, the specialized polynomial $f(t_0,x)$ is totally real. \par
Furthermore, after setting $t:=t(s):=(11s^2+1)/(\frac{-13^2 \cdot 83 \cdot 194687^3}{2^3 \cdot 5^3 \cdot 11^{10} \cdot 271^2}  s^2 + 1))$, the polynomial
$g(s,x):=f(t(s),x)$ has regular Galois group $M_{22}$ over $\qq(s)$ 
and yields totally real specializations for all $s\mapsto s_0 \in \qq$ with $|s_0| < 0.0184$.
\end{satz}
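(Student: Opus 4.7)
The plan is to verify the four assertions of Theorem \ref{m22} in turn, following the template established in the Galois-group verifications of Theorems \ref{psl52pol}, \ref{pgl211a} and \ref{psl33}.

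\textbf{Galois group, ramification and totally real fibers of $f$.} First I would pin down the geometric Galois group $G := \mathrm{Gal}(f \mid \overline{\qq}(t))$. Dedekind reduction at a few primes of good reduction shows that $G$ is a transitive subgroup of $S_{22}$ containing $M_{22}$; the list of transitive groups of degree $22$ together with the observation that the inertia generator of cycle type $2^{11}$ is odd (hence outside $M_{22}$ and outside $A_{22}$) narrows $G$ to one of $\mathrm{Aut}(M_{22})$, $A_{22}$, $S_{22}$. To exclude the last two I would, following Section \ref{psl34}, interpolate a second polynomial in the same Hurwitz family with the same branch locus (arising from two distinct rational points on the rank-$1$ elliptic curve parameterizing the family) and show that a suitable resultant-type polynomial in $\qq(t)[x,y]$ factors into pieces whose degrees match a double-coset pattern available in $\mathrm{Aut}(M_{22})$ but not in $A_{22}$ or $S_{22}$; if the natural index-$22$ action is not sufficiently distinguishing, one can instead pass to the degree-$77$ block action on the Steiner system $S(3,6,22)$. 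Regularity then follows from the self-normalization of $\mathrm{Aut}(M_{22})$ in $S_{22}$. The claimed ramification structure drops out of the factored form of $f$ at $t \mapsto 0, \infty$ together with a Newton-polygon analysis at the two remaining (algebraically conjugate) branch points; the totally real assertion is checked by tracking complex conjugation on a reference fiber via the braid-action procedure of Section \ref{algdep} and extending by continuity to all of $(1,\infty)$, since no real branch point of $f$ lies above $1$.

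\textbf{Descent to $M_{22}$ and its totally real interval.} The subgroup $M_{22}$ has index $2$ in $\mathrm{Aut}(M_{22})$, so its fixed field inside the Galois closure of $f$ is a quadratic extension $\qq(t)(\sqrt{d(t)})$ for an explicit $d(t) \in \qq(t)$ determinable from the action of $\mathrm{Aut}(M_{22})$ on the cosets of $M_{22}$ (equivalently, via a resolvent computation). The substitution $t(s) = (11s^2 + 1)/(Cs^2 + 1)$ has degree $2$ over $\qq(t)$ and is ramified exactly at the $s$-points lying above $t = 1$ and $t = 11/C$; the assertion that $g(s,x)$ has Galois group $M_{22}$ over $\qq(s)$ is equivalent to $\qq(s) = \qq(t)(\sqrt{d(t)})$, i.e.\ to $d(t(s))$ being a perfect square in $\qq(s)$. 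The very specific constant $C = -\,13^2 \cdot 83 \cdot 194687^3 / (2^3 \cdot 5^3 \cdot 11^{10} \cdot 271^2)$ is chosen precisely to encode a rational point on the conic defined by $d$; once this is checked, Dedekind reduction of $g$ provides the reverse inclusion $\mathrm{Gal}(g \mid \qq(s)) \supseteq M_{22}$, yielding equality and regularity. Finally, the totally real interval $|s_0| < 0.0184$ is obtained by combining $t(s_0) > 1$ with a direct check that no real branch point of $g$ (in particular no real zero of $Cs^2 + 1$) lies in this range.

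\textbf{Main obstacle.} The hardest step is the Galois-group identification for $f$: unlike $PSL_5(2)$ or $P\Gamma L_3(4)$, the group $\mathrm{Aut}(M_{22})$ does not appear to possess two non-conjugate subgroups of index $22$, so the short ``tensor trick'' used in the earlier sections must be upgraded to a genuine resolvent-type computation --- manageable at degree $22$, more involved at degree $77$ if that becomes necessary. Equally delicate is the justification of the precise value of $C$: proving that $d(t(s))$ is a square in $\qq(s)$ requires knowing $d(t)$ explicitly, and the appearance of a factor $194687^3$ strongly suggests that the relevant rational point on the conic was found only through direct explicit computation rather than a clean theoretical argument, mirroring the kind of a-posteriori rationality verifications emphasized in the introduction.
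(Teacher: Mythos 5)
Your outline agrees with the paper on everything except the one step that carries all the weight. Dedekind reduction leaving only $Aut(M_{22})$ and $S_{22}$ (the $2^{11}$ class being odd kills $A_{22}$), the descent to $M_{22}$ via the quadratic subfield --- the paper phrases this as checking that the substitution $t(s)$ makes the discriminant of $f$ a square, so that $Gal(g)\le A_{22}$ and hence equals $Aut(M_{22})\cap A_{22}=M_{22}$ --- and the routine verification of the totally real intervals all match. The genuine gap is the exclusion of $S_{22}$. Your primary mechanism, two covers in the same family with the same branch locus plus a resultant factorization, is the trick of Sections \ref{psl52} and \ref{psl34}, and as you yourself observe it is unavailable here: $Aut(M_{22})$ has a single conjugacy class of index-$22$ subgroups, so two rational points on the elliptic curve with the same branch locus give covers attached to conjugate point stabilizers, and the resultant can only exhibit the orbit pattern $1+21$, which $S_{22}$ exhibits as well. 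Your fallback (``pass to the degree-$77$ block action'') names the right subgroup but never says how the resolvent is to be computed or, more importantly, what invariant of it separates $Aut(M_{22})$ from $S_{22}$; note that a $6$-set stabilizer $S_6\times S_{16}$ in $S_{22}$ also has orbits $6+16$, so a $6+16$ splitting over \emph{some} auxiliary field proves nothing by itself.

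The paper's resolution supplies both missing pieces. The inertia involution at $t\mapsto\infty$ has cycle type $2^8.1^6$, and its six fixed points --- the six simple poles of $t(x)$ --- form a block of the $(3,6,22)$-Steiner system; hence the symmetric functions of the Puiseux expansions of these six poles in $1/t$ generate exactly the fixed field $F$ of the index-$77$ block stabilizer. This turns the degree-$77$ resolvent into a manageable computation. Riemann--Hurwitz applied to the induced degree-$77$ action shows $F$ has genus $2$, Riemann--Roch then produces an exact plane model of bidegree $(2,3)$ for $F$ from the approximate series, and one verifies exactly (no longer numerically) that $f$ splits over $F$ into factors of degrees $6$ and $16$. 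The discriminating invariant is the \emph{genus}: if the group were $S_{22}$, the field generated by the symmetric functions of those six roots would be the fixed field of a $6$-set stabilizer, of index $\binom{22}{6}$ over $\qq(t)$, which Riemann--Hurwitz forces to have genus far larger than $2$. Without this genus comparison (or some equally concrete substitute) your argument does not close; with it, the rest of your proposal goes through as written.
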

\begin{proof}
The assertions about totally real specializations can be easily checked; also after proving the first assertion, one simply computes the discriminant to confirm that 
the Galois group of $g$ must be $Aut(M_{22}) \cap A_{22} = M_{22}$.\par
So we are left with showing that $Gal(f|\qq(t))\cong Aut(M_{22})$. By Dedekind reduction, one quickly sees that the only candidates are $Aut(M_{22})$ and $S_{22}$. 
To exclude the latter, one may use the fact that $Aut(M_{22})$ has an index-77 subgroup acting intransitively with orbits of degrees $6$ and $16$ 
(the stabilizer of a block of the $(3,6,22)$-Steiner system). As the six fixed points of the involution generating the inertia group at $t\mapsto \infty$ form such a block, expand the six simple poles
of $t=t(x)$ as series in $\frac{1}{t}$. The symmetric functions in these six elements then generate the fixed field $F$ of the Steiner block stabilizer. Sufficiently precise series yield an algebraic dependency
describing $F$, and as the Riemann-Hurwitz formula shows that this field is of genus 2, suitable Riemann-Roch space computations even yield an equation in two variables of degrees 2 and 3 for $F$.
Now express $t$ through these two variables; so far, everything has been based on approximations, but now verify that the polynomial $f(t,x)$ decomposes over (the genus 2 field) $F$ into factors of degree 6 and 16.
Riemann-Hurwitz shows that the fixed field of a 6-set stabilizer in $S_{22}$ would have much higher genus; this proves the assertion.
(Unfortunately, the occurring equations are too large to fit into this paper; however, note that once again this method of proof is rigorous and does not rely on numerical approximation as a monodromy computation would.)
\end{proof}
\section{Concluding remarks and applications}
All the Hurwitz spaces under consideration in the previous sections turned out to possess infinitely many rational points.
In particular, Theorems \ref{pgl211a}, \ref{pgl211} and \ref{m22} provide a single polynomial with real fibers, corresponding to a rational point on the respective Hurwitz curve. 
As mentioned above, there are actually infinitely many rational points, as the curves are elliptic of rank $rk>0$. 
As for any non-singular cubic curve $E$, defined over $\qq$, with $E(\qq)$ infinite, $\qq$-points lie dense 
(in the topology of $\mathbb{P}^2\rr$) around any given rational point, one obtains as an immediate corollary that the Hurwitz curves of all these families contain infinitely many points
with real fibers. This is because the property to possess real fibers is purely group theoretic and therefore locally invariant in the Hurwitz space.
\par
An obvious application of the parametric families is the search for number fields with prescribed Galois group (and possibly prescribed signature) and small discriminant, cf.\ the Kl\"uners-Malle 
database \cite{KM2}. We only give two examples. The first is a totally real $PGL_2(11)$-polynomial with root discriminant of small absolute value.
\begin{lemma}
The polynomial 
{\footnotesize\[g_0(x):=x^{12} - 4x^{11} - 220x^{10} - 88x^9 + 9768x^8 + 18480x^7 - 133760x^6 - 382272x^5\]\[ + 352880x^4 + 1664960x^3 + 455488x^2 - 994304x + 217152\]}
has Galois group $PGL_2(11)$ over $\qq$ and splitting field contained in $\rr$. The discriminant of a root field is equal to $2^{18} \cdot 3^5 \cdot 11^{13} \cdot 41^6 \approx 10^{31}$.
\end{lemma}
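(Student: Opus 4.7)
The strategy is to identify $g_0$ as a totally real specialization of the parametric polynomial $\tilde{g}(t,x)$ from Theorem \ref{pgl211}, so that the Galois group and reality claims follow from that theorem, and then to compute the discriminant directly.

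First I would locate $g_0$ within the family. Normalizing $\tilde{g}(t,x)$ to be monic of degree $12$ in $x$ yields a one-parameter family of polynomials whose coefficients are rational functions of $t$; matching the coefficient of $x^{11}$ (equal to $-4$ in $g_0$) and that of $x^{10}$ (equal to $-220$) would determine a rational value $t_0$ together with an affine change of variable $x \mapsto \phi(x)$ converting $g_0$ into $\tilde{g}(t_0, \phi(x))$ up to a scalar multiple. I would then verify the match by comparing the remaining ten coefficients. A successful match with $t_0$ in the totally real interval $(0,a)$ of Theorem \ref{pgl211} (with $a\approx 14.755$) immediately gives the reality statement. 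A quick resultant check $\mathrm{disc}_x(g_0) \neq 0$ confirms $t_0$ is not a branch point, so the specialization is non-degenerate.

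Second, for the Galois group, the cleanest route after the specialization is to combine Dedekind's theorem with the subgroup structure of $S_{12}$. The primitive subgroups of $S_{12}$ properly containing $PGL_2(11)$ are only $M_{12}$, $A_{12}$ and $S_{12}$; since $M_{12} \subset A_{12}$, it suffices to rule out $A_{12}$ and $S_{12}$. Checking that $\mathrm{disc}(g_0)$ is not a square rules out $A_{12}$ (and hence $M_{12}$); factoring $g_0$ modulo a prime producing a cycle type not realized in $M_{12}$ (for instance one containing an $11$-cycle or appropriate mixed type) completes the argument. Transitivity (irreducibility) is checked by the same Dedekind reductions. Alternatively, one may simply invoke that the Galois group of a non-degenerate rational specialization of a regular $PGL_2(11)$-extension of $\qq(t)$ is a transitive subgroup of $PGL_2(11)$, and exclude intransitive options by irreducibility mod $p$.

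Third, I would compute the polynomial discriminant $\mathrm{disc}(g_0)$ as a resultant in a computer algebra system, factoring the result into prime powers; the claim is $2^{18} \cdot 3^5 \cdot 11^{13} \cdot 41^6$. To pass from the polynomial discriminant to the field discriminant, apply the Dedekind index criterion at each of $p \in \{2,3,11,41\}$ to check that $\zz[\alpha] = \mathcal{O}_K$ locally at every ramified prime. The main obstacle is this last step: at $p=11$ the exponent $13$ is odd and comparatively large, suggesting a mix of tame and wild ramification that must be untangled by a careful Newton-polygon (or Ore/Montes–style) analysis of $g_0$ mod $11$, while the exponent $18$ at $p=2$ with the leading coefficient already monic nevertheless requires verifying there is no hidden index divisor. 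Once the index is confirmed to be $1$ at each prime, the polynomial discriminant equals the field discriminant and the stated value is obtained.
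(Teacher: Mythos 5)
Your overall strategy --- realize $g_0$ as a totally real specialization of $\tilde g$ from Theorem \ref{pgl211}, then compute the discriminant --- is exactly the paper's, which simply records that $g_0$ is obtained from $\tilde g$ by specializing $t\mapsto 27/10$ and applying Magma's \texttt{OptimizedRepresentation}. However, your first step would fail as written: precisely because of that \texttt{OptimizedRepresentation} pass, $g_0$ is \emph{not} an affine transform $\tilde g(t_0,ux+v)$ of any member of the family; it is the minimal polynomial of a different primitive element of the same degree-$12$ field. Matching the $x^{11}$- and $x^{10}$-coefficients under an affine substitution will therefore produce no consistent $t_0$, and the verification stalls at step one. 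The correct identification is a field isomorphism: take $t_0=27/10\in(0,a)$ and check that $g_0$ has a root in the root field of $\tilde g(27/10,x)$. (Total reality can then be imported from Theorem \ref{pgl211}, or obtained even more directly by a Sturm-sequence count showing $g_0$ has twelve real roots, which already places the splitting field inside $\rr$.)

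The second genuine gap is in your standalone Galois-group argument. Dedekind reductions only certify that certain cycle types \emph{occur}; they can never exclude $S_{12}$, and the non-square discriminant shows $G\not\le A_{12}$, which eliminates $A_{12}$, $M_{12}$ and $PSL_2(11)$ but says nothing against $S_{12}$. So ``factoring $g_0$ modulo a prime producing a cycle type not realized in $M_{12}$'' does not complete the argument: nothing in that route bounds $G$ from above. (Incidentally, $M_{12}$ does not contain $PGL_2(11)$ at all, since $M_{12}\le A_{12}$ while $PGL_2(11)$ contains odd permutations of type $2^5.1^2$ and $4^3$; this is harmless but symptomatic.) The only workable source of the upper bound $G\le PGL_2(11)$ is your ``alternative'' route --- the specialization of the regular family --- after which irreducibility, an $11$-cycle from a Dedekind reduction (forcing primitivity, hence $G\in\{PSL_2(11),PGL_2(11)\}$) and the non-square discriminant pin down $G=PGL_2(11)$. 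In other words, the identification with the family is not an optional convenience but the load-bearing step, which is why its failure in your formulation matters. Your plan for the discriminant itself (polynomial discriminant plus a local index check at $2$, $3$, $11$, $41$, with attention to the wild ramification at $p=11$ signalled by the odd exponent $13$) is sound.
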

This polynomial is obtained from the polynomial $\tilde{g}$ in Theorem \ref{pgl211} by specializing $t\mapsto 27/10$ and then applying Magma's method \texttt{OptimizedRepresentation}.
\par
Also, the $PSL_3(3)$-family from Theorem \ref{psl33} has many specializations with ``small" discriminant in the sense that very few primes ramify. 
We conclude by giving a $PSL_3(3)$-number field ramified over one prime only.
\begin{lemma}
The polynomial {\footnotesize \[f_0(x):=x^{13} - 6x^{12} - 4542x^{11} - 226075x^{10} + 8156061x^9 + 770464590x^8\]
\[+ 11462215447x^7 - 970419905164x^6 - 33706100049495x^5 + 18705429494567x^4\]\[ + 29408002566579439x^3 + 237585722590314749x^2 - 
    2291157493210202812x - 11381632704121436976\]}
has Galois group $PSL_3(3)$, and only the prime $p=83420911386433$ ramifies in its splitting field.
In fact, a root field has discriminant $p^4$.
\end{lemma}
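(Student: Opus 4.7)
The plan is to decouple the two assertions---that $\mathrm{Gal}(f_0/\qq) \cong PSL_3(3)$, and that $p = 83420911386433$ is the only prime ramifying in the splitting field, with $d_K = p^{4}$---and handle them separately. For the Galois group, I would first locate $f_0$ as (an \texttt{OptimizedRepresentation}-reduced form of) a specialization of the three-parameter family $f(\alpha,\beta,t,x)$ of Theorem~\ref{psl33} at some $(\alpha_0,\beta_0,t_0)\in\qq^{3}$ outside the branch locus; this immediately gives $\mathrm{Gal}(f_0/\qq) \le PSL_3(3)$ a priori. It then remains only to exclude proper transitive subgroups, but the transitive subgroups of $PSL_3(3)$ of degree $13$ are exactly $C_{13}$, the Frobenius group $F_{39} = C_{13}\rtimes C_{3}$, and $PSL_3(3)$ itself (any such subgroup contains a Sylow $13$-subgroup $P$, and since $N_G(P)=F_{39}$ forces $n_{13}\in\{1,27,144\}$, the numerical restrictions rule out everything else). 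Since the first two candidates have exponent dividing $39$, exhibiting via Dedekind reduction a single Frobenius element of order $4$, $6$, or $8$ at any good prime suffices to pin down the group.

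For the ramification, I would compute $\mathrm{disc}(f_0)\in\zz$ and factor it. The expectation is a factorization of the shape $\mathrm{disc}(f_0) = p^{4}\cdot m^{2}$, with $m = [\mathcal{O}_K : \zz[\theta]]$, where $\theta$ is a root of $f_0$ and $K = \qq(\theta)$. To descend from $\mathrm{disc}(f_0)$ to the field discriminant $d_K = p^{4}$, one must verify, at each prime $\ell \ne p$ dividing $m$, that $\ell$ is in fact unramified in $K$; this is a prime-by-prime local Round~2 (or \texttt{pMaximalOrder} in Magma) computation. I expect this step to be the chief practical obstacle: the coefficients of $f_0$ are enormous, so $\mathrm{disc}(f_0)$ is astronomical and must first be completely factored before the local maximal-order analyses can even begin. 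In particular, encountering a large prime factor of $m$ that resists factorization would stall the argument, even though the underlying local question at such a prime is easy.

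Once $d_K = p^{4}$ is established, $p$ is the unique ramified prime of $K$. The splitting field $L$ is the compositum of the $\mathrm{Gal}(L/\qq)$-conjugates of $K$, all isomorphic as abstract fields (and hence sharing the same ramified rational primes); since being unramified at a rational prime is preserved under taking composita (the maximal unramified extension of $\qq_{\ell}^{\mathrm{nr}}$ contains each local embedding of $K$), only $p$ can ramify in $L$, which proves the lemma.
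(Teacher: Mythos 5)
Your proposal is correct and follows essentially the same route as the paper: the paper's entire proof of this lemma is the single remark that $f_0$ arises from Theorem~\ref{psl33} by specializing $\alpha:=148/69$, $\beta:=1$, $t:=-1$ and applying Magma's \texttt{OptimizedRepresentation}, with all verification left implicit. The details you supply --- that the only transitive degree-$13$ subgroups of $PSL_3(3)$ are $C_{13}$, $C_{13}\rtimes C_3$ and the full group (so one even Frobenius element suffices after the specialization bound $\mathrm{Gal}(f_0/\qq)\le PSL_3(3)$), the local maximal-order computation reducing $\mathrm{disc}(f_0)$ to $d_K=p^4$, and the compositum argument transferring unramifiedness to the splitting field --- are all correct and simply make explicit what the paper takes for granted.
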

This polynomial is obtained from Theorem \ref{psl33}, via $\alpha:=148/69$, $\beta:=1$ and $t:=-1$, and again Magma's \texttt{OptimizedRepresentation}.
\section*{Acknowledgement}
I would like to thank Peter M\"uller for introducing me into many of the subjects of this work as well as carefully reading earlier versions,
J\"urgen Kl\"uners for informing me about some open cases of totally real Galois extensions,
and the anonymous referee for many helpful suggestions for improvements.

\end{document}